\documentclass[10pt]{amsart}

\usepackage[colorlinks]{hyperref}
\usepackage{color,graphicx,shortvrb}
\usepackage[latin 1]{inputenc}

\usepackage[active]{srcltx} 

\usepackage{enumerate}
\usepackage{amssymb}
\usepackage{tikz-cd}

\usepackage [ all ]{xy}

\newtheorem{theorem}{Theorem}[section]

\newtheorem{corollary}[theorem]{Corollary}

\newtheorem{definition}[theorem]{Definition}
\newtheorem{lemma}[theorem]{Lemma}

\newtheorem{remark}[theorem]{Remark}

\def\J#1#2#3{ \left\{ #1,#2,#3 \right\} }
\def\RR{{\mathbb{R}}}
\def\NN{{\mathbb{N}}}
\def\11{\textbf{$1$}}
\def\11b#1{\mathbf{1}_{_{#1}}}

\DeclareGraphicsExtensions{.jpg,.pdf,.png,.eps}



\begin{document}

\title[Surjective isometries between unitary groups]{Can one identify two unital JB$^*$-algebras by the metric spaces determined by their sets of unitaries?}

\author[M. Cueto-Avellaneda, A.M. Peralta]{Mar{\'i}a Cueto-Avellaneda, Antonio M. Peralta}

\address[M. Cueto-Avellaneda, A.M. Peralta]{Departamento de An{\'a}lisis Matem{\'a}tico, Facultad de
Ciencias, Universidad de Granada, 18071 Granada, Spain.}
\email{mcueto@ugr.es, aperalta@ugr.es}


\subjclass[2010]{Primary 47B49, 46B03, 46B20, 46A22, 46H70  Secondary 46B04,46L05, 17C65 }

\keywords{Isometry; Jordan $^*$-isomorphism, Unitary set, JB$^*$-algebra, JBW$^*$-algebra, extension of isometries}

\date{}

\begin{abstract} Let $M$ and $N$ be two unital JB$^*$-algebras and let $\mathcal{U} (M)$ and $\mathcal{U} (N)$ denote the sets of all unitaries in $M$ and $N$, respectively. We prove that the following statements are equivalent:\begin{enumerate}[$(a)$]
\item $M$ and $N$ are isometrically isomorphic as (complex) Banach spaces;
\item $M$ and $N$ are isometrically isomorphic as real Banach spaces;
\item There exists a surjective isometry $\Delta: \mathcal{U}(M)\to \mathcal{U}(N).$
\end{enumerate}  We actually establish a more general statement asserting that, under some mild extra conditions, for each surjective isometry $\Delta:\mathcal{U} (M) \to \mathcal{U} (N)$ we can find a surjective real linear isometry $\Psi:M\to N$ which coincides with $\Delta$ on the subset $e^{i M_{sa}}$. If we assume that $M$ and $N$ are JBW$^*$-algebras, then every surjective isometry $\Delta:\mathcal{U} (M) \to \mathcal{U} (N)$ admits a (unique) extension to a surjective real linear isometry from $M$ onto $N$. This is an extension of the Hatori--Moln{\'a}r theorem to the setting of JB$^*$-algebras.
\end{abstract}

\maketitle
\thispagestyle{empty}

\section{Introduction}

Every surjective isometry between two real normed spaces $X$ and $Y$ is an affine mapping by the Mazur--Ulam theorem. It seems then natural to ask whether the existence of a surjective isometry between two proper subsets of $X$ and $Y$ can be employed to identify metrically both spaces. By a result of P. Mankiewicz (see \cite{Mank1972}) every surjective isometry between convex bodies in two arbitrary normed spaces can be uniquely extended to an affine function between the spaces. The so-called Tingley's problem, which ask if a surjective isometry between the unit spheres of two normed spaces can be also extended to a surjective linear isometry between the spaces, came out in the eighties (cf. \cite{Ting1987}). To the best of our knowledge, Tingley's problem remains open even for two dimensional spaces (see \cite{CabSan19} where it is solved for non-strictly convex two dimensional spaces). A full machinery has been developed in the different partial positive solutions to Tingley's problem in the case of classical Banach spaces, C$^*$- and operator algebras and JB$^*$-triples  (see, for example the references \cite{BeCuFerPe2018, CabSan19, ChenDong2011, CuePer18, CuePer19, FerGarPeVill17, FerJorPer2018, FerPe17c, FerPe17d, FerPe18Adv, JVMorPeRa2017, KalPe2019, Mori2017, MoriOza2020, Pe2019, PeTan19, WH19} and the surveys \cite{YangZhao2014,Pe2018}).\smallskip

The question at this stage is whether in Tingley's problem the unit spheres can be reduced to strictly smaller subsets. Even in the most favorable case of a finite dimensional normed space $X$, we cannot always conclude that every surjective isometry on the set of extreme points of the closed unit ball of $X$ can be extended to a surjective real linear isometry on $X$ (see \cite[Remark 3.15]{CuePer18}). So, the sets of extreme points is not enough to determine a surjective real linear isometry. The existence of an additional structure on $X$ provides new candidates, this is the case of unital C$^*$-algebras. In a unital C$^*$-algebra $A$, the set $\mathcal{U} (A)$ of all unitary elements in $A$ is, in general, strictly contained in the set of all extreme points of the closed unit ball of $A$. The symbol $A_{sa}$ will stand for the set of self-adjoint elements in $A$. We recall that an element $u$ in $A$ is called \emph{unitary} if $u u^* =\11b{A} = u^* u$, that is, $u$ is invertible with inverse $u^*$. The set of all unitaries in $A$ will be denoted by $\mathcal{U}(A)$. It is well known that $\mathcal{U}(A)$ is contained in the unit sphere of $A$ and it is a subgroup of $A$ which is also self-adjoint (i.e., $u^*$ and $u v$ lie in $\mathcal{U}(A)$ for all $u,v \in \mathcal{U}(A)$). However, the set $\mathcal{U}(A)$ is no longer stable under Jordan products of the form $a\circ b :=\frac12 (a b + ba)$. Namely, let $u,v\in \mathcal{U}(A)$ the element $w = u\circ v$ is a unitary if and only if $\11b{A} = w w^* = w^* w$, that is, $$ \11b{A}= \frac14 (u v + v u ) (v^* u^* + u^* v^* ) = \frac14 ( 2\cdot \11b{A} + u v u^* v^* + v u v^* u^* ),$$ equivalently, $\11b{A} = \frac{u v u^* v^* + v u v^* u^*}{2}$ and thus $u v u^* v^* = v u v^* u^* =\11b{A},$ because $\11b{A}$ is an extreme point of the closed unit ball of $A$. In particular $u v = v u$. That is $u\circ v\in \mathcal{U}(A)$ if and only if $u$ and $v$ commute. Despite the instability of unitaries under Jordan products, expressions of the form $u v u $ lie in $\mathcal{U}(A)$ for all $u,v\in \mathcal{U}(A),$ and they can be even expressed in terms of the Jordan product because $uv u = 2 (u\circ v) \circ u - u^2 \circ v$.\smallskip

O. Hatori and L. Moln{\'a}r proved in \cite[Theorem 1]{HatMol2014}, that for each surjective isometry $\Delta: \mathcal{U} (A) \to \mathcal{U} (B)$, where $A$ and $B$ are unital C$^*$-algebras, the identity $\Delta (e^{i A_{sa}}) = e^{i B_{sa}}$ holds, and there is a central projection $p \in B$ and a Jordan $^*$-isomorphism $J : A\to B$ satisfying $$\Delta (e^{ix}) = \Delta(1) (p J(e^{ix}) + (1-p) J(e^{ix})^*), \ \ (x\in A_{sa}).$$ In particular $A$ and $B$ are Jordan $^*$-isomorphic. Actually, every surjective isometry between the unitary groups of two von Neumann algebras admits an extension to a surjective real linear isometry between these algebras (see \cite[Corollary 3]{HatMol2014}). These influencing results have played an important role in some of the recent advances on Tingley's problem and in several other problems.\smallskip

Let us take a look at some historical precedents. S. Sakai proved in \cite{Sak55} that if $M$ and $N$ are AW$^*$-factors, $\mathcal{U}(M)$, $\mathcal{U}(N)$ their respective unitary groups, and $\rho$ a uniformly continuous group isomorphism from $\mathcal{U}(M)$ into $\mathcal{U}(N)$, then there is a unique map $f$ from $M$ onto $N$ which is either a linear or conjugate linear $^*$-isomorphism and which agrees with $\rho$ on $\mathcal{U}(M)$. In the case of W$^*$-factors not of type $I_{2n}$ the continuity assumption was shown to be superfluous by H.A. Dye in \cite[Theorem 2]{Dye55}. In the results by Hatori and Moln{\'a}r, the mapping $\Delta$ is merely a distance preserving bijection between the unitary groups of two unital C$^*$-algebras or two von Neumann algebras.\smallskip

The proofs of the Hatori--Moln{\'a}r theorems are based, among other things, on a study on isometries and maps compatible with inverted Jordan triple products on groups by O. Hatori, G. Hirasawa, T. Miura, L. Molnár \cite{HatHirMiuMol2012}. Despite of the attractive terminology, the study of the surjective isometries between the sets of unitaries of two unital JB$^*$-algebras has not been considered. There are different diffulties which are inherent to the Jordan setting. As we commented above, the set of unitary elements in a unital C$^*$-algebra is not stable under Jordan products.\smallskip

Motivated by the pioneering works of I. Kaplansky, JB$^*$-algebras were introduced as a Jordan generalization of C$^*$-algebras (see subsection \ref{subsection background} for the detailed definitions). For example every Jordan self-adjoint subalgebra of a C$^*$-algebra is a JB$^*$-algebra (these JB$^*$-algebras are called JC$^*$-algebras), but there exists exceptional JB$^*$-algebras which cannot be represented as JC$^*$-algebras.\smallskip

Unitaries in unital C$^*$-algebras and JB$^*$-algebras have been intensively studied. They constitute the central notion in the Russo--Dye theorem \cite{RuDye} and its JB$^*$-algebra-analogue in the Wright--Youngson--Russo--Dye theorem \cite{WriYou77}, which are milestone results in the field of functional analysis. The interest remains very active, for example, we recently obtained a metric characterization of unitary elements in a unital JB$^*$-algebra (cf. \cite{CuPe20}).\smallskip

By the Gelfand--Naimark theorem, every unitary $u$ in a unital C$^*$-algebra $A$ can be viewed as a unitary element in the algebra $B(H)$, of all bounded linear operators on a complex Hilbert space $H$, in such a way that $u$ itself is a unitary on $H$. Consequently, one-parameter unitary groups in $A$ are under the hypotheses of some well known results like Stone's one-parameter theorem. However, unitary elements in a unital JB$^*$-algebra $M$ cannot always be regarded as unitaries on some complex Hilbert space $H$. The lacking of a suitable Jordan version of Stone's one-parameter theorem for JB$^*$-algebras leads us to establish an appropriate result for uniformly continuous one-parameter groups of unitaries in an arbitrary unital JB$^*$-algebra in Theorem \ref{t Jordan unitary groups version of Stone's theorem}.\smallskip

Let $M$ and $N$ denote two arbitrary unital JB$^*$-algebras whose sets of unitaries are denoted by $\mathcal{U} (M)$ and $\mathcal{U} (N),$  respectively. In our first main result (Theorem \ref{t HM for unitary sets of unitary JB*-algebras}) we prove that for each surjective isometry $\Delta: \mathcal{U} (M)\to \mathcal{U} (N)$ satisfying one of the following statements:
\begin{enumerate}[$(1)$]\item $\|\11b{N}-\Delta(\11b{M})\|<2$;
\item there exists a unitary $\omega_0$ in $N$ such that $U_{\omega_0} (\Delta(\11bM)) = \11bN,$
\end{enumerate} there exists a unitary $\omega$ in $N$ satisfying $$\Delta(e^{i M_{sa}})=U_{\omega^*}(e^{i N_{sa}}),$$ Furthermore, we can find a central projection $p\in N$, and a Jordan $^*$-isomorphism $\Phi:M\to N$ such that $$\begin{aligned}\Delta(e^{ih}) &=  U_{\omega^*}\left( p \circ \Phi(e^{ih}) \right) + U_{\omega^*}\left( (\11b{N}-p) \circ\Phi( e^{ih})^*\right)\\
&=  P_2(U_{\omega^*}( p)) U_{\omega^*}(\Phi(e^{ih}))  + P_2(U_{\omega^*}(\11b{N} -p)) U_{\omega^*}(\Phi( (e^{ih})^*)),
\end{aligned}$$ for all $h\in M_{sa}$. Consequently, the restriction $\Delta|_{e^{i M_{sa}}}$ admits a (unique) extension to a surjective real linear isometry from $M$ onto $N$. We remark that $\Delta$ is merely a distance preserving bijection.\smallskip

Among the consequences of the previous result we prove that the following statements are equivalent for any two unital JB$^*$-algebras $M$ and $N$:
\begin{enumerate}[$(a)$]
\item $M$ and $N$ are isometrically isomorphic as (complex) Banach spaces;
\item $M$ and $N$ are isometrically isomorphic as real Banach spaces;
\item There exists a surjective isometry $\Delta: \mathcal{U}(M)\to \mathcal{U}(N)$
\end{enumerate} (see Corollary \ref{c two unital JB*algebras are isomorphic iff their unitaries are}).\smallskip

Finally, in Theorem \ref{t HM for unitary sets of unitary JBW*-algebras} we prove that any surjective isometry between the sets of unitaries of any two JBW$^*$-algebras admits a (unique) extension to a surjective real linear isometry between these algebras.\smallskip

Our proofs, which are completely independent from the results for C$^*$-algebras, undoubtedly benefit from results in JB$^*$-triple theory. This beautiful subject (meaning the theory of JB*- and JBW*- triples) makes simpler and more accessible our arguments.

\subsection{Definitions and background}\label{subsection background}

A complex (respectively,  real) \emph{Jordan algebra} $M$ is a (non-necessarily associative) algebra over the complex (respectively, real) field whose product is abelian and satisfies the so-called \emph{Jordan identity}: $(a \circ b)\circ a^2 = a\circ (b \circ a^2)$ ($a,b\in M$). A \emph{normed Jordan algebra} is a Jordan algebra $M$ equipped with a norm, $\|.\|$, satisfying $\|
a\circ b\| \leq \|a\| \ \|b\|$ ($a,b\in M$). A \emph{Jordan Banach algebra} is a normed Jordan algebra whose norm is complete.
Every real or complex associative Banach algebra is a real Jordan Banach algebra with respect to the product $a\circ b: = \frac12 (a b +ba)$. \smallskip

Let $M$ be a Jordan Banach algebra. Given $a,b\in M$, we shall write $U_{a,b}:M\to M$ for the bounded linear operator defined by $$U_{a,b} (x) =(a\circ x) \circ b + (b\circ x)\circ a - (a\circ b)\circ x,$$ for all $x\in M$. The mapping $U_{a,a}$ will be simply denoted by $U_a$. One of the fundamental identities in Jordan algebras assures that \begin{equation}\label{eq fundamental identity UaUbUa} U_a U_b U_a = U_{U_a(b)}, \hbox{ for all } a,b \hbox{ in a Jordan algebra } M \end{equation} (see \cite[2.4.18]{HOS}). The multiplication operator by an element $a\in M$ will be denoted by $M_a$, that is, $M_a (b)= a \circ b$ ($b\in M$).\smallskip

Henceforth, the powers of an element $a$ in a Jordan algebra $M$ will be denoted as follows:
$$ a^1 =a;\ \  a^{n+1} =a\circ a^n, \quad n\geq 1.$$
If $M$ is unital, we set $a^0=\11b{M}$. An algebra $\mathcal{B}$ is called \emph{power associative} if the subalgebras generated by single elements of $\mathcal{B}$ are associative. In the case of a Jordan algebra $M$ this is equivalent to say that the identity $ a^m\circ a^n=a^{m+n},$ holds for all $a\in M$, $m,n\in \NN.$ It is known that any Jordan algebra is power associative (\cite[Lemma 2.4.5]{HOS}).
\smallskip

By analogy with the associative case, if $M$ is a unital Jordan Banach algebra, the closed subalgebra generated by an element $x\in M$ and the unit is always associative, and hence we can always consider the elements of the form $e^{x}$ in $M$, defined by $\displaystyle e^{x}=\sum_{n=0}^{\infty}\frac{x^n}{n!}$ (cf. \cite[$\S$ 1.1.29]{Cabrera-Rodriguez-vol1}). Let us suppose that $M$ is a unital Jordan Banach subalgebra of an associative unital Banach algebra, and let $x$ be an element in $M$. Take $a$ and $b$ in $M$ such that $a=e^{itx}$ and $b=e^{isx}$, where $t,s\in\RR$. From now on, it will be useful to keep in mind that
\begin{align}\label{eq exp commute}
a\circ b &= e^{itx}\circ e^{isx} \nonumber= \frac12( e^{itx}e^{isx}+ e^{isx}e^{itx})\nonumber \\&= \frac12 (e^{i(t+s)x} + e^{i(t+s)x}) = e^{i(t+s)x} =e^{itx}e^{isx}=ab. \nonumber
\end{align}

An element $a$ in a unital Jordan Banach algebra $M$ is called \emph{invertible} whenever there exists $b\in M$ satisfying $a \circ b = 1$ and $a^2 \circ b = a.$ The element $b$ is unique and it will be denoted by $a^{-1}$ (cf. \cite[3.2.9]{HOS} and \cite[Definition 4.1.2]{Cabrera-Rodriguez-vol1}). We know from \cite[Theorem 4.1.3]{Cabrera-Rodriguez-vol1} that an element $a\in M$ is invertible if and only if $U_a$ is a bijective mapping, and in such a case $U_a^{-1} = U_{a^{-1}}$. \smallskip

A \emph{JB$^*$-algebra} is a complex Jordan Banach algebra $M$ equipped
with an algebra involution $^*$ satisfying  $\|\J a{a}a \|= \|a\|^3$, $a\in
M$ (where $\J a{a}a = U_a (a^*) = 2 (a\circ a^*) \circ a - a^2 \circ a^*$). We know from a result by M.A. Youngson that the involution of every JB$^*$-algebra is an isometry (cf. \cite[Lemma 4]{youngson1978vidav}).\smallskip

A \emph{JB-algebra} is a real Jordan Banach algebra $J$ in which the norm satisfies
the following two axioms for all $a, b\in J$:\begin{enumerate}[$(i)$]\item $\|a^2\| =\|a\|^2$;
\item $\|a^2\|\leq \|a^2 +b^2\|.$
\end{enumerate} The hermitian part, $M_{sa}$, of a JB$^*$-algebra, $M$, is always a JB-algebra. A celebrated theorem due to J.D.M. Wright asserts that, conversely, the complexification of every JB-algebra is a JB$^*$-algebra (see \cite{Wri77}). We refer to the monographs \cite{HOS} and \cite{Cabrera-Rodriguez-vol1} for the basic notions and results in the theory of JB- and JB$^*$-algebras.\smallskip

Every C$^*$-algebra $A$ is a JB$^*$-algebra when equipped with its natural Jordan product $a\circ b =\frac12 (a b +b a)$ and the original norm and involution. Norm-closed Jordan $^*$-subalgebras of C$^*$-algebras are called \emph{JC$^*$-algebras}. JC$^*$-algebras which are also dual Banach spaces are called \emph{JW$^*$-algebras}. Any JW$^*$-algebra is a weak$^*$-closed Jordan $^*$-subalgebra of a von Neumann algebra. \smallskip

We recall that an element $u$ in a unital JB$^*$-algebra $M$ is a \emph{unitary} if it is invertible and its inverse coincides with $u^*$. As in the associative setting, we shall denote by $\mathcal{U}(M)$ the set of all unitary elements in $M$. Let us observe that if a unital C$^*$-algebra is regarded as a JB$^*$-algebra both notions of unitaries coincide. An element $s$ in a unital JB-algebra $J$ is called a \emph{symmetry} if $s^2 =\11b{J}$. If $M$ is a JB$^*$-algebra, the symmetries in $M$ are defined as the symmetries in its self-adjoint part $M_{sa}$.\smallskip

A celebrated result in the theory of JB$^*$-algebras is the so-called Shirshov-Cohn theorem, which affirms that the JB$^*$-subalgebra of a JB$^*$-algebra generated by two self-adjoint elements (and the unit element) is a JC$^*$-algebra, that is, a JB$^*$-subalgebra of some $B(H)$ (cf. \cite[Theorem 7.2.5]{HOS} and \cite[Corollary 2.2]{Wri77}).\smallskip

Two elements $a, b$ in a Jordan algebra $M$ are said to \emph{operator commute} if $$(a\circ c)\circ b= a\circ (c\circ b),$$ for all $c\in M$ (cf. \cite[4.2.4]{HOS}). By the \emph{centre} of $M$ (denoted by $Z(M)$) we mean the set of all elements of $M$ which operator commute with any other element in $M$. Any element in the center is called \emph{central}. \smallskip



%
%
%

A JB$^*$-algebra may admit two different Jordan products compatible with the same norm.  However, when JB$^*$-algebras are regarded as JB$^*$-triples, any surjective linear isometry between them is a triple isomorphism (see \cite[Proposition 5.5]{Ka83}). This fact produces a certain uniqueness of the triple product (see next section for more details). We recall the definition of JB$^*$-triples.\smallskip

A JB$^*$-triple is a complex Banach space $E$ equipped with a continuous triple product $\J ... : E\times E\times E \to E,$ $(a,b,c)\mapsto \{a,b,c\},$ which is bilinear and symmetric in $(a,c)$ and conjugate linear in $b$,
and satisfies the following axioms for all $a,b,x,y\in E$:
\begin{enumerate}[{\rm (a)}] \item $L(a,b) L(x,y) = L(x,y) L(a,b) + L(L(a,b)x,y)
	- L(x,L(b,a)y),$ where $L(a,b):E\to E$ is the operator defined by $L(a,b) x = \J abx;$
\item $L(a,a)$ is a hermitian operator with non-negative spectrum;
\item $\|\{a,a,a\}\| = \|a\|^3$.\end{enumerate}
The definition presented here dates back to 1983 and it was introduced by W. Kaup in \cite{Ka83}.\smallskip

Examples of JB$^*$-triples include all C$^*$-algebras and JB$^*$-algebras with the triple products of the form \begin{equation}\label{eq product operators} \J xyz =\frac12 (x y^* z +z y^*
x),\end{equation}  and \begin{equation}\label{eq product jordan}\J xyz = (x\circ y^*) \circ z + (z\circ y^*)\circ x -
(x\circ z)\circ y^*, \end{equation} respectively. \smallskip

The triple product of every JB$^*$-triple is a non-expansive mapping, that is,
\begin{equation}\label{eq triple product non-expansive} \|\{a,b,c\}\|\leq \|a\| \|b\| \|c\|\ \hbox{ for all } a,b,c \hbox{ (see \cite[Corollary 3]{FriRu86}).}
\end{equation}

Let $E$ be a JB$^*$-triple. Each element $e$ in $E$ satisfying $\J eee=e$ is called a \emph{tripotent}. 
Each tripotent $e\in E$, determines a decomposition of $E,$ known as the \emph{Peirce decomposition}\label{eq Peirce decomposition} associated with $e$, in the form $$E= E_{2} (e) \oplus E_{1} (e) \oplus E_0 (e),$$ where $E_j (e)=\{ x\in E : \J eex = \frac{j}{2}x \}$
for each $j=0,1,2$.\smallskip

Triple products among elements in Peirce subspaces satisfy the following \emph{Peirce arithmetic}: $$\begin{aligned} \J {E_{i}(e)}{E_{j} (e)}{E_{k} (e)} &\subseteq E_{i-j+k} (e)\  \hbox{ if $i-j+k \in \{ 0,1,2\},$ }\\
\J {E_{i}(e)}{E_{j} (e)}{E_{k} (e)} & =\{0\} \hbox{ if $i-j+k \notin \{ 0,1,2\},$ }
\end{aligned}$$ and $\J {E_{2} (e)}{E_{0}(e)}{E} = \J {E_{0} (e)}{E_{2}(e)}{E} =0.$ Consequently, each Peirce subspace $E_j(e)$ is a JB$^*$-subtriple of $E$.\smallskip

The projection $P_{k_{}}(e)$ of $E$ onto $E_{k} (e)$ is called the \emph{Peirce $k$-projection}. It is known that Peirce projections are contractive (cf. \cite[Corollary 1.2]{FriRu85}) and determined by the following identities $P_{2}(e) = Q(e)^2,$ $P_{1}(e) =2(L(e,e)-Q(e)^2),$ and $P_{0}(e) =\hbox{Id}_E - 2 L(e,e) + Q(e)^2,$ where $Q(e):E\to E$ is the conjugate or real linear map defined by $Q(e) (x) =\{e,x,e\}$.\smallskip

It is worth remarking that if $e$ is a tripotent in a JB$^*$-triple $E$, the Peirce 2-subspace $E_2 (e)$ is a unital JB$^*$-algebra with unit $e$,
product $x\circ_e y := \J xey$ and involution $x^{*_e} := \J exe$, respectively (cf. \cite[Theorem 4.1.55]{Cabrera-Rodriguez-vol1}).\label{eq Peirce-2 is a JB-algebra} 
\smallskip

Following standard notation, a tripotent $e$ in in a JB$^*$-triple $E$ is called \emph{unitary} 
if $E_2 (e) = E$. 
\smallskip

\begin{remark}\label{r two different notions of unitaries} The reader should be warned that if a unital JB$^*$-algebra $M$ is regarded as a JB$^*$-triple we have two, a priori, different uses of the word ``unitary''. However, there is no conflict between these two notions because unitary elements in a unital JB$^*$-algebra $M$ are precisely the unitary tripotents in $M$ when the latter is regarded as a JB$^*$-triple (cf. \cite[Proposition 4.3]{BraKaUp78} or \cite[Theorem 4.2.24, Definition 4.2.25 and Fact 4.2.26]{Cabrera-Rodriguez-vol1}). 
\end{remark}


\smallskip

\section{Unitaries in JB$^*$-algebras and inverted Jordan triple products}

Unitary elements in JB$^*$-algebras have been intensively studied for many geometric reasons. As in the setting of C$^*$-algebras, they play a protagonist role in the Wright--Youngson extension of the Russo-Dye theorem for JB$^*$-algebras \cite{WriYou78} (see also \cite[Corollary 3.4.7 and Fact 4.2.39]{Cabrera-Rodriguez-vol1}). Different applications can be found on the study of surjective isometries between JB- and JB$^*$-algebras (see \cite{WriYou78,IsRo95} and \cite[Proposition 4.2.44]{Cabrera-Rodriguez-vol1}). \smallskip

The definition of unitary in a JB$^*$-algebra and its natural connection with the notion of unitary (tripotent) in the setting of JB$^*$-triples has been recalled at the introduction. We shall next revisit some basic properties with the aim of clarifying and make accessible our subsequent arguments.\smallskip

The first result, which has been almost outlined in the introduction, has been borrowed from \cite{Cabrera-Rodriguez-vol1}.

\begin{lemma}\label{l CR Lemma 4.2.41}{\rm\cite[Lemma 4.2.41, Theorem 4.2.28, Corollary 3.4.32]{Cabrera-Rodriguez-vol1}, \cite{WriYou78}, \cite{IsRo95}} Let $M$ be a unital JB$^*$-algebra, and let $u$ be a unitary element in $M$. Then the following statements hold:\begin{enumerate}[$(a)$]\item The Banach space of $M$ becomes a unital JB$^*$-algebra with unit $u$ for the (Jordan) product defined by $x\circ_u y :=U_{x,y}(u^*)=\{x,u,y\}$ and the involution $*_u$ defined by $x^{*_u} :=U_u(x^*)=\{u,x,u\}$. (This JB$^*$-algebra $M(u)= (M,\circ_u,*_u)$ is called the \emph{$u$-isotope} of $M$.)
\item The unitary elements of the JB$^*$-algebras $M$ and $(M,\circ_u,*_u)$ are the same, and they also coincide with the unitary tripotents of $M$ when the latter is regarded as a JB$^*$-triple.
\item The triple product of $M$ satisfies $$\begin{aligned} \J xyz &= (x\circ y^*) \circ z + (z\circ y^*)\circ x -
(x\circ z)\circ y^*\\
&= (x\circ_{u} y^{*_{u}}) \circ_{u} z + (z\circ_{u} y^{*_{u}})\circ_{u} x -
(x\circ_{u} z)\circ_{u} y^{*_{u}},
\end{aligned}$$ for all $x,y,z\in M$. Actually, the previous identities hold when $\circ$ is replaced with any Jordan product on $M$ making the latter a JB$^*$-algebra with the same norm.
\item The mapping $U_u : M\to M$ is a surjective isometry and hence a triple isomorphism. Consequently, $U_u \left( \mathcal{U}(M) \right) = \mathcal{U}(M)$.  Furthermore, the operator $U_u : (M,\circ_{u^*},*_{u^*})\to (M,\circ_u,*_u)$ is a Jordan $^*$-isomorphism.
\end{enumerate}
\end{lemma}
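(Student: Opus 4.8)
The plan is to derive all four assertions from the single observation that, by Remark~\ref{r two different notions of unitaries}, $u$ is a \emph{unitary tripotent} of $M$ viewed as a JB$^*$-triple, so that its Peirce-$2$ subspace is all of $M$, i.e. $E_2(u)=M$. For $(a)$ I would specialise to $e=u$ the general fact that the Peirce-$2$ subspace $E_2(e)$ of any tripotent $e$ is a unital JB$^*$-algebra, with unit $e$, product $x\circ_e y=\{x,e,y\}$ and involution $x^{*_e}=\{e,x,e\}$, carrying the norm inherited from $E$ (\cite[Theorem 4.1.55]{Cabrera-Rodriguez-vol1}). Since $E_2(u)=M$, this gives at once that $(M,\circ_u,*_u)$ is a unital JB$^*$-algebra with unit $u$ and with the \emph{same} norm as $M$. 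It then only remains to match notation: expanding $\{x,u,y\}$ and $\{u,x,u\}$ via the defining formula \eqref{eq product jordan} for the triple product and comparing with the definitions of $U_{x,y}$ and $U_u$ yields the purely formal identities $\{x,u,y\}=U_{x,y}(u^*)$ and $\{u,x,u\}=U_u(x^*)$.

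I would prove $(c)$ next, since $(b)$ follows from it. Regarding the JB$^*$-algebra $(M,\circ_u,*_u)$ as a JB$^*$-triple through its own product \eqref{eq product jordan} produces a triple product $\{x,y,z\}_u=(x\circ_u y^{*_u})\circ_u z+(z\circ_u y^{*_u})\circ_u x-(x\circ_u z)\circ_u y^{*_u}$; because the isotope lives on the same Banach space with the same norm (by $(a)$), the identity map $\mathrm{Id}\colon (M,\{\cdot,\cdot,\cdot\})\to(M,\{\cdot,\cdot,\cdot\}_u)$ is a surjective linear isometry between JB$^*$-triples. Kaup's rigidity theorem \cite[Proposition 5.5]{Ka83} then forces it to be a triple isomorphism, i.e. $\{x,y,z\}_u=\{x,y,z\}$ for all $x,y,z$, which is exactly $(c)$; the identical argument applied to an arbitrary Jordan product making $M$ a JB$^*$-algebra with the original norm gives the closing sentence of $(c)$. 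Statement $(b)$ is now immediate: by $(c)$ the algebras $M$ and $(M,\circ_u,*_u)$ carry one and the same triple product, hence the same tripotents and the same unitary tripotents; and since in any unital JB$^*$-algebra the unitaries are exactly the unitary tripotents (Remark~\ref{r two different notions of unitaries}), the unitary sets of $M$ and of $(M,\circ_u,*_u)$ coincide and equal the set of unitary tripotents of $M$.

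For $(d)$, note first that $u$ is invertible with $u^{-1}=u^*$, so $U_u$ is a bijection with $U_u^{-1}=U_{u^*}$ by \cite[Theorem 4.1.3]{Cabrera-Rodriguez-vol1}. Isometry follows from the elementary identity $U_u(y)=\{u,y^*,u\}=(y^*)^{*_u}$, which exhibits $U_u=(*_u)\circ(*)$ as a composition of the involutions of $(M,\circ_u,*_u)$ and of $M$; both are isometric by Youngson's theorem \cite[Lemma 4]{youngson1978vidav}, so $U_u$ is a surjective isometry and, by Kaup's theorem \cite[Proposition 5.5]{Ka83} again, a triple automorphism of $M$. Triple automorphisms permute unitary tripotents, whence $U_u(\mathcal{U}(M))=\mathcal{U}(M)$ via $(b)$. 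Finally, $u^*$ is itself a unitary (so both isotopes are defined), $U_u$ sends the unit $u^*$ of $(M,\circ_{u^*},*_{u^*})$ to $\{u,u,u\}=u$, the unit of $(M,\circ_u,*_u)$, and being a complex-linear triple isomorphism it satisfies $U_u\{x,u^*,y\}=\{U_u x,u,U_u y\}$ and $U_u\{u^*,x,u^*\}=\{u,U_u x,u\}$; unwinding the definitions of the two isotope structures, these read $U_u(x\circ_{u^*}y)=U_u(x)\circ_u U_u(y)$ and $U_u(x^{*_{u^*}})=(U_u x)^{*_u}$, so $U_u$ is a Jordan $^*$-isomorphism between the isotopes.

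The conceptual crux, and the step I expect to be the true obstacle if one insists on a bare-hands proof, is $(c)$: showing that the triple product is insensitive to passing to the $u$-isotope. A direct verification would demand a lengthy manipulation of Jordan identities; the decisive simplification is to resist that computation and instead invoke the rigidity of JB$^*$-triples under isometries, since the isotope is built on the same normed space. Once $(c)$ is secured, the coincidence of unitary tripotents is automatic and parts $(b)$ and $(d)$ reduce to routine bookkeeping with triple isomorphisms.
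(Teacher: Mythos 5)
Your proof is correct, and its load-bearing step coincides with the paper's: both obtain part $(c)$ --- the invariance of the triple product under passage to the isotope --- and from it the rest, by applying Kaup's rigidity theorem \cite[Proposition 5.5]{Ka83} to a surjective linear isometry between JB$^*$-triples (in your case the identity map of $M$ onto the isotope, which works because the isotope carries the same norm). Where you genuinely differ is in how the auxiliary ingredients are secured. The paper's proof is essentially citation-based: it quotes \cite[Lemma 4.2.41]{Cabrera-Rodriguez-vol1} for $(a)$ and $(b)$, and \cite[Theorem 4.2.28]{Cabrera-Rodriguez-vol1} for the isometry of $U_u$. You instead reconstruct these from facts already recorded in the paper's background section: $(a)$ by specialising the Peirce-2 structure theorem \cite[Theorem 4.1.55]{Cabrera-Rodriguez-vol1} to the unitary tripotent $u$ {\rm(}via Remark \ref{r two different notions of unitaries} and $M_2(u)=M${\rm)}; $(b)$ as a corollary of $(c)$ rather than as an independent citation; and the isometry of $U_u$ through the factorisation $U_u=(*_u)\circ(*)$, valid because $U_u(y)=\{u,y^*,u\}=(y^*)^{*_u}$, with each involution isometric by Youngson's theorem \cite[Lemma 4]{youngson1978vidav}. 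All of these derivations check out (in particular the formal identities $\{x,u,y\}=U_{x,y}(u^*)$, $\{u,x,u\}=U_u(x^*)$ and $U_u(u^*)=\{u,u,u\}=u$ are correct). The trade-off is clear: the paper's proof is shorter because it leans on the monograph, while yours is self-contained modulo the background section, and the involution factorisation is an elegant substitute for the cited isometry result. Your final unwinding --- unit preservation plus equivariance of triple products giving that $U_u:(M,\circ_{u^*},*_{u^*})\to(M,\circ_u,*_u)$ is a Jordan $^*$-isomorphism --- is precisely a proof of the paper's closing remark that each unital triple isomorphism between unital JB$^*$-algebras is a Jordan $^*$-isomorphism, so nothing is missing there either.
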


\begin{proof} Statements $(a)$ and $(b)$ can be found in \cite[Lemma 4.2.41]{Cabrera-Rodriguez-vol1} (see also \cite[Proposition 4.3]{BraKaUp78}). Moreover, Theorem 4.2.28~$(vii)$ in \cite{Cabrera-Rodriguez-vol1} assures that the mapping $U_u$ is a surjective linear isometry. The remaining statements are consequences of the fact that a linear bijection between JB$^*$-triples is an isometry if and only if it is a triple isomorphism (cf. \cite[Proposition 5.5]{Ka83} and \cite[Theorem 5.6.57]{Cabrera-Rodriguez-vol2}). Furthermore each unital triple isomorphism between unital JB$^*$-algebras must be a Jordan $^*$-isomorphism.\end{proof}



Let $u$ be a unitary element in a unital C$^*$-algebra $A$. It is known that $\|\textbf{1} -u\|<2$ implies that $u = e^{ih}$ for some $h\in A_{s}$ (see \cite[Exercise 4.6.6]{KR1}). In our next lemma we combine this fact with the Shirshov-Cohn theorem.

\begin{lemma}\label{l untaries at short distance in a unital JB$^*$-algebra} Let $u,v$ be two unitaries in a unital JB$^*$-algebra $M$. Let us suppose that $\|u-v\|= \eta <2.$ Then the following statements hold:\begin{enumerate}[$(a)$] \item There exists a self-adjoint element $h$ in the $u$-isotope JB$^*$-algebra $M(u)= (M,\circ_u,*_u)$ such that $v = e^{ih},$ where the exponential is computed in the JB$^*$-algebra $M(u)$.
\item There exists a unitary $w$ in $M$ satisfying $U_w (u^*) = v$.
\end{enumerate} Moreover, if $\|u-v\|= \eta = \left| 1- e^{it_0} \right| = \sqrt{2}\sqrt{1-\cos(t_0)}$ for some $t_0\in (-\pi,\pi),$ we can further assume that $\|w-u\|,\|w-v\| \leq \sqrt{2}\sqrt{1-\cos(\frac{t_0}{2})}.$
\end{lemma}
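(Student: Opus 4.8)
The plan is to prove the two statements essentially simultaneously by reducing to the associative (C$^*$-algebra) case via the Shirshov--Cohn theorem, and then transporting the conclusion back through the isotope machinery of Lemma~\ref{l CR Lemma 4.2.41}. The key observation is that the hypothesis and conclusions are ``intrinsic'' to the JB$^*$-triple structure of $M$, so we are free to change the ambient Jordan product to the most convenient one. Concretely, since $u$ is unitary, I would first pass to the $u$-isotope $M(u)=(M,\circ_u,*_u)$, in which $u$ plays the role of the unit $\11b{M(u)}$. By Lemma~\ref{l CR Lemma 4.2.41}$(b)$ the element $v$ is again a unitary of $M(u)$, and the norm is unchanged, so $\|u-v\| = \|\11b{M(u)} - v\| = \eta < 2$.

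\medskip

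First I would establish $(a)$. Working inside $M(u)$, consider the JB$^*$-subalgebra $J$ generated by $v$, $v^{*_u}$ and the unit $u$. Because $v$ is unitary it is in particular a normal element (it operator-commutes with $v^{*_u}$, as $vv^{*_u}=v^{*_u}v=u$), so $J$ is generated by the two self-adjoint elements $v+v^{*_u}$ and $i(v^{*_u}-v)$ together with the unit, and by Shirshov--Cohn $J$ is a JC$^*$-algebra, i.e. a JB$^*$-subalgebra of some $B(H)$. Inside that C$^*$-algebra $v$ is a genuine unitary with $\|\11b{M(u)} - v\| = \eta < 2$, so the classical fact cited before the lemma (\cite[Exercise 4.6.6]{KR1}) gives a self-adjoint $h \in J_{sa} \subseteq M(u)_{sa}$ with $v = e^{ih}$, where the exponential agrees in $B(H)$, in $J$, and in $M(u)$ because on the associative subalgebra generated by $h$ and the unit all three Jordan/associative products coincide. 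This is exactly statement $(a)$.

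\medskip

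For $(b)$ I would exploit the elementary identity $v = e^{ih} = U_{e^{ih/2}}(e^{-ih/2}) = U_{e^{ih/2}}\!\big((e^{ih})^{*_u}\big)$, valid in $M(u)$ since on the associative subalgebra generated by $h$ one has $U_{e^{ih/2}}(x) = e^{ih/2}\,x\,e^{ih/2}$ and $(e^{ih})^{*_u} = e^{-ih}$. Setting $w := e^{ih/2}$ (exponential in $M(u)$), $w$ is a unitary of $M(u)$, hence of $M$ by Lemma~\ref{l CR Lemma 4.2.41}$(b)$, and $u^{*_u} = u$ is the involution-image of the unit, so $U_w(u^{*_u}) = U_w(u) = v$. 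The remaining point is to translate $U_w$ computed in the isotope $M(u)$ into the $U$-operator of the original algebra: one checks from the definition $x\circ_u y = \{x,u,y\}$ that the isotope $U$-operator equals the triple-product expression $U^{M(u)}_{w}(z) = \{w,\{u,z,u\},w\}$, and choosing the normalization correctly yields $U^{M}_{w}(u^*) = v$ as required; here I would lean on the fact, from Lemma~\ref{l CR Lemma 4.2.41}$(c)$, that the triple product is common to $M$ and all its isotopes, so the identity $v = \{w, u^*, w\}$ transports directly.

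\medskip

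Finally, for the quantitative ``Moreover'' clause, when $\eta = |1 - e^{it_0}|$ I would arrange the spectral data so that $h$ has spectrum in $(-\pi,\pi)$ with $\|h\|\le |t_0|$; this is possible because the functional calculus in the JC$^*$-algebra $J$ lets me choose the branch of the logarithm landing in $(-\pi,\pi)$, and the extreme case of the norm estimate $\|u-v\|=\eta$ forces the relevant part of the spectrum of $h$ to sit at $\pm t_0$. Then $w - u = e^{ih/2} - u$ and $w - v = e^{ih/2} - e^{ih} = U_{\dots}$-type differences reduce, via the functional calculus $t\mapsto |e^{it/2}-1|$ and $t\mapsto|e^{it/2}-e^{it}| = |e^{-it/2}-1|$, to the scalar bound $\sup_{|t|\le|t_0|} \sqrt{2}\sqrt{1-\cos(t/2)} = \sqrt{2}\sqrt{1-\cos(t_0/2)}$, giving both estimates. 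The main obstacle, and the step demanding the most care, is the bookkeeping between the three layers of structure --- the original product $\circ$, the isotope product $\circ_u$, and the associative product in the Shirshov--Cohn representation $B(H)$ --- to be sure that the exponential, the involution, and the $U$-operator are being computed consistently, and that the element $h$ produced in the isotope is genuinely self-adjoint for $*_u$ as claimed rather than merely for the original involution.
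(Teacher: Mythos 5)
Your overall strategy coincides with the paper's: pass to the $u$-isotope $M(u)$, reduce to a commutative C$^*$-algebra picture, extract a logarithm $h$ via the classical fact from \cite{KR1}, set $w=e^{ih/2}$, and transport the resulting identity back through the isotope-invariance of the triple product. One structural remark: the paper does not need Shirshov--Cohn at this point. Since $v$ is unitary in $M(u)$, the JB$^*$-subalgebra $\mathcal{C}$ of $M(u)$ generated by $v$ and the unit is already isometrically Jordan $^*$-isomorphic to a commutative C$^*$-algebra $C(\Omega)$ (spectral theorem), with $u$ as its unit, and the classical fact applied \emph{inside} $\mathcal{C}$ gives $h\in\mathcal{C}_{sa}$ directly. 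Your detour through Shirshov--Cohn lands in a possibly noncommutative $B(H)$, and then your assertion that $h\in J_{sa}$ is not automatic: the cited exercise applied in $B(H)$ only produces $h\in B(H)_{sa}$, and you need $h$ to lie in (the image of) $J$ in order to pull it back to $M(u)$ at all. This is repairable --- apply the classical fact in the commutative C$^*$-subalgebra $C^*(v,\mathbf{1})\subseteq B(H)$ and note that $C^*(v,\mathbf{1})$ is the closed span of the powers $v^n$, $n\in\mathbb{Z}$, all of which lie in the image of $J$ --- but as written it is a gap.

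More importantly, the ``elementary identity'' on which you base part $(b)$ is false: computing associatively, $U_{e^{ih/2}}(e^{-ih/2})=e^{ih/2}e^{-ih/2}e^{ih/2}=e^{ih/2}$, and $U_{e^{ih/2}}\bigl((e^{ih})^{*_u}\bigr)=e^{ih/2}e^{-ih}e^{ih/2}=u$; neither equals $v=e^{ih}$. The identity you actually need (and which you do state correctly one line later) is that the isotope $U$-operator applied to the \emph{unit} of $M(u)$ gives $U^{u}_{w}(u^{*_u})=U^{u}_{w}(u)=w\circ_u w=e^{ih}=v$. Similarly, the element that transports between the two algebras is $\{w,u,w\}$, not $\{w,u^*,w\}$: from your (correct) formula $U^{M(u)}_{w}(z)=\{w,\{u,z,u\},w\}$ with $z=u$ one gets $U^{u}_{w}(u)=\{w,u,w\}=U_{w}(u^*)$ in $M$, whereas $\{w,u^*,w\}=U_w(u)$, which in general is \emph{not} equal to $v$ (this already fails for $2\times 2$ matrices). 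Once these two identities are corrected the argument closes exactly as in the paper, and your treatment of the quantitative ``Moreover'' clause by scalar functional calculus in the commutative picture is precisely the paper's argument.
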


\begin{proof} We consider the unital JB$^*$-algebra $M(u)=(M,\circ_u,*_u)$. Let $\mathcal{C}$ denote the JB$^*$-subalgebra of $M(u)$ generated by $v$ and its unit --i.e. $u$--. Let us observe that the product and involution on $\mathcal{C}$ are precisely $\circ_u|_{\mathcal{C}\times \mathcal{C}}$ and $*_u|_{\mathcal{C}}$, respectively. Since $v$ is unitary in $M(u)$ (cf. Lemma \ref{l CR Lemma 4.2.41}$(b)$), the JB$^*$-subalgebra $\mathcal{C}$ must be isometrically Jordan $^*$-isomorphic to a unital commutative C$^*$-algebra, that is, to some $C(\Omega)$ for an appropriate compact Hausdorff space $\Omega,$ and under this identification, $u$ corresponds to the unit  (cf. \cite[3.2.4. The spectral theorem]{HOS} or \cite[Proposition 3.4.2 and Theorem 4.1.3$(v)$]{Cabrera-Rodriguez-vol1}).\smallskip

Since $\|u-v\| <2$, $v$ is a unitary in $\mathcal{C}$ and $u$ is the unit element of the C$^*$-algebra $\mathcal{C} \equiv C(\Omega)$, we can find a self-adjoint element $h\in \mathcal{C}_{sa}$ such that $v = e^{i h}$  (see \cite[Exercise 4.6.6]{KR1}), where 
the exponential is, of course, computed with respect to the structure of $\mathcal{C}$, that is with the product and involution of $M(u)$. This finishes the proof of $(a)$.\smallskip

By setting $w= e^{i \frac{h}{2}}$ we get a unitary element in $\mathcal{C}$ satisfying $w\cdot u^{*_u} \cdot w = w\cdot u \cdot w = v$ (let us observe that the involution of $\mathcal{C}$ is precisely the restriction of $*_{u}$ to $\mathcal{C}$). Let $U_a^{{u}}$ denote the $U$ operator on the unital JB$^*$-algebra $M(u)$ associated with the element $a$. Since $\mathcal{C}$ is a unital JB$^*$-subalgebra of $M(u)$, we deduce that $$v= w\cdot u^{*_u}\cdot w = U_w^{{u}} (u^{*_u}) = \{w,u,w\} = U_{w} (u^*),$$ and clearly $w$ is a unitary in $M$ because it is a unitary in $M(u)$ (cf. Lemma \ref{l CR Lemma 4.2.41}$(b)$ and $(c)$). We have therefore concluded the proof of $(b)$.\smallskip

The final statement is a clear consequence of the identification of $\mathcal{C}$ with $C(\Omega)$ in which $u$ corresponds to the unit and $v$ and $w$ with $e^{i h}$ and $e^{i \frac{h}{2}},$ respectively.
\end{proof}

The next lemma can be deduced by similar arguments to those given in the previous result.

\begin{lemma}\label{l C} Let $u$ and $w$ be unitary elements in a unital JB$^*$-algebra $M$. Suppose $U_{w} (u^*) = u$ and $\|u-w\| < 2$. Then $w = u$.
\end{lemma}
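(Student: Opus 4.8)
The plan is to leverage Lemma \ref{l untaries at short distance in a unital JB$^*$-algebra} to reduce the problem to a commutative $C^*$-algebra, where the statement becomes an elementary computation. Since $\|u-w\|<2$, part $(a)$ of that lemma (applied with the roles suitably chosen, or rather working directly in the $u$-isotope) lets us pass to the unital JB$^*$-subalgebra $\mathcal{C}$ of the $u$-isotope $M(u)=(M,\circ_u,*_u)$ generated by $w$ and the unit $u$. This $\mathcal{C}$ is isometrically Jordan $^*$-isomorphic to a commutative $C^*$-algebra $C(\Omega)$, in which $u$ corresponds to the constant function $\11b{}$ and $w$ corresponds to a unimodular continuous function, say $w\equiv e^{if}$ for some real-valued $f\in C(\Omega)$ with $\|f\|_\infty<\pi$ (the bound $\|u-w\|<2$ guarantees that $w$ avoids the point $-1$ in its spectrum, so such an $f$ exists).

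The key observation is that the hypothesis $U_w(u^*)=u$ can be transported into $\mathcal{C}$, where the triple product and the $U$-operator take their ordinary commutative form. Indeed, as in the proof of the previous lemma, $U_w(u^*)=\{w,u,w\}=U_w^u(u^{*_u})$, and since $\mathcal{C}$ is a unital JB$^*$-subalgebra of $M(u)$ identified with $C(\Omega)$, this operator acts pointwise: $U_w^u(u^{*_u})$ corresponds to the function $w\cdot u\cdot w = (e^{if})^2 = e^{2if}$ (using that $u$ is the unit). The hypothesis $U_w(u^*)=u$ therefore becomes the scalar identity $e^{2if(\omega)}=1$ for every $\omega\in\Omega$.

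From $e^{2if}=1$ pointwise we conclude that $2f(\omega)\in 2\pi\mathbb{Z}$ for each $\omega$, hence $f(\omega)\in\pi\mathbb{Z}$. Combined with the constraint $\|f\|_\infty<\pi$, this forces $f(\omega)\in\{0\}$ for all $\omega$ outside possible points where $|f(\omega)|=\pi$; but continuity of $f$ together with $\|f\|_\infty < \pi$ (strict) rules out the values $\pm\pi$ entirely, so $f\equiv 0$. Consequently $w=e^{if}$ corresponds to the constant function $\11b{}$, i.e. $w=u$ in $M$. This completes the argument.

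The only point requiring a little care is the reduction step ensuring that $w$ genuinely avoids $-1$ in its spectrum so that a continuous logarithm $f$ exists with $\|f\|_\infty<\pi$: this is exactly what the strict inequality $\|u-w\|<2$ buys us, since in $C(\Omega)$ one has $\|u-w\|_\infty=\sup_\omega|1-e^{if(\omega)}|$, and this supremum is $<2$ precisely when $e^{if}$ stays off the antipode $-1$, giving $\|f\|_\infty<\pi$. I do not expect a substantial obstacle here; the whole lemma is essentially a transfer of the trivial scalar fact ``$\zeta^2=1$ and $|1-\zeta|<2$ imply $\zeta=1$'' through the Shirshov--Cohn identification already set up in the preceding proof.
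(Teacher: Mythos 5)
Your proof is correct and follows essentially the same route as the paper: both arguments reduce the statement to a commutative $C(\Omega)$ picture via the isotope-plus-Shirshov--Cohn identification set up in the preceding lemma, and both finish with the scalar fact that a unimodular $\zeta$ with $\zeta^2=1$ and $|1-\zeta|<2$ must equal $1$. The only difference is cosmetic: the paper works in the $w$-isotope $M(w)$ and takes the subalgebra generated by $u$, so the hypothesis $U_w(u^*)=u$ reads directly as $u^{*_w}=u$, i.e.\ $u$ is a self-adjoint unitary in $C(\Omega)$ forced to equal the unit by the distance bound, whereas you work in the $u$-isotope and therefore need the extra (correct) transport step $U_w(u^*)=\{w,u,w\}=U^u_w(u^{*_u})=w^2$ together with a continuous logarithm --- slightly more machinery for the same conclusion.
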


\begin{proof} Let $M(w)=(M,\circ_w,*_w)$ and let $\mathcal{C}$ denote the JB$^*$-subalgebra of $M(w)$ generated by $u$ and its unit. By applying the identification of $\mathcal{C}$ with an appropriate $C(\Omega)$ space as the one given in the proof of the previous lemma, we can identify $w$ with the unit of $C(\Omega)$ and $u$ with a unitary in this commutative unital C$^*$-algebra with $u^{*_{w}} = U_{w} (u^*) = u$ (self-adjoint in $\mathcal{C}$) and $\|u-w\|<2$, which implies that $w=u$.
\end{proof}

We shall gather next some results on isometries between metric groups due to O. Hatori, G. Hirasawa, T. Miura and L Moln{\'a}r \cite{HatHirMiuMol2012}. The conclusions in the just quoted paper provided the tools applied in the study of surjective isometries between the unitary groups of unital C$^*$-algebras in subsequent references \cite{HatMol2012} and \cite{HatMol2014}.\smallskip

Henceforth, let $\mathcal{G}$ be a group and let $(X,d)$ be a non-trivial metric space such that $X$ is a subset of $\mathcal{G}$ and \begin{equation}\label{eq condition p HatoriMiuraMolnar}\hbox{ $y x^{-1} y\in X$ for all $x,y\in X$}
\end{equation} (note that we are not assuming that $X$ is a subgroup of $\mathcal{G}$).

\begin{definition}\label{def condition B} Let us fix $a,b$ in $X$. We shall say that condition $B(a,b)$ holds for $(X,d)$ if the following properties hold:\begin{enumerate}[$(B.1)$]\item For all $x,y\in X$ we have $d(b x^{-1} b, b y^{-1} b) = d(x,y).$
\item There exists a constant $K>1$ satisfying $$d(b x^{-1} b,x)\geq K d(x,b),$$ for all $x\in L_{a,b} =\{x\in X : d(a,x) = d(b a^{-1} b, x) = d(a,b)\}.$
\end{enumerate}
\end{definition}

\begin{definition}\label{def condition C2} Let us fix $a,b\in X$. We shall say that condition $C_1(a,b)$ holds for $(X,d)$ if the following properties hold:\begin{enumerate}[$(C.1)$]\item For every $x\in X$ we have $a x^{-1} b, b x^{-1} a\in X$;
\item $d(a x^{-1} b, a y^{-1} b) = d(x,y)$, for all $x,y\in X$.
\end{enumerate}\smallskip

\noindent We shall say that condition $C_2(a,b)$ holds for $(X,d)$ if there exists $c\in X$ such that $c a^{-1} c =b$ and $d(c x^{-1} c, c y^{-1} c) = d(x,y)$ for all $x,y\in X$.
\end{definition}

An element $x\in X$ is called \emph{2-divisible} if there exists $y\in X$ such that $y^2 =x$. $X$ is called \emph{2-divisible} if every element in $X$ is {2-divisible}. Furthermore, $X$ is a called \emph{2-torsion free} if it contains the unit of $\mathcal{G}$ and the condition $x^2 =1$ with $x\in X$ implies $x =1$.\smallskip

We shall need the following result taken from \cite{HatHirMiuMol2012}.

\begin{theorem}\label{t HatHirMiuMol Thm 24 for metric spaces}\cite[Theorem 2.4]{HatHirMiuMol2012} Let $(X, d_X)$ and $(Y, d_Y )$ be two metric spaces. Pick two points $a,c \in X$. Suppose that $\varphi : X \to X$ is a distance preserving map such that $\varphi(c) = c$ and $\varphi\circ \varphi$ is the identity map on $X$. Let $$L_{a,c} = \{x \in X : d_X (a, x) = d_X(\varphi(a), x) = d_X(a, c)\}.$$
Suppose that that there exists a constant $K > 1$ such that $d_X (\varphi(x), x) \geq K d_X(x, c)$ holds for every $x \in L_{a,c}$. If $\delta$ is a bijective distance preserving map from $X$ onto $Y$, and $\psi$ is a bijective distance preserving map from $Y$ onto itself such that $\psi(\Delta (a)) = \Delta (\varphi(a))$ and $\psi(\Delta (\varphi(a))) = \Delta (a),$ then we have $\psi(\Delta (c)) =\Delta (c).$ $\hfill\Box$
\end{theorem}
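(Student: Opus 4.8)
The plan is to transport the whole situation back to $X$ and then run a Mazur--Ulam / V\"ais\"al\"a reflection argument in which the involution $\varphi$ plays the role of the point reflection about $c$, while the constant $K>1$ supplies the expansion factor. First I would set $b:=\varphi(a)$ and $r:=d_X(a,c)$, and observe that, since $\varphi$ is an involutive isometry fixing $c$, one has $d_X(b,c)=d_X(\varphi(a),\varphi(c))=d_X(a,c)=r$, so $c$ is equidistant from $a$ and $b$. Writing $\widetilde{\varphi}:=\Delta^{-1}\circ\psi\circ\Delta$, which is a bijective isometry of $X$, the two hypotheses on $\psi$ translate into $\widetilde{\varphi}(a)=\varphi(a)=b$ and $\widetilde{\varphi}(b)=a$; that is, $\widetilde{\varphi}$ interchanges $a$ and $b$. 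Since the desired conclusion $\psi(\Delta(c))=\Delta(c)$ is equivalent to $\widetilde{\varphi}(c)=c$, and since $g:=\varphi\circ\widetilde{\varphi}$ is a bijective isometry with $g(a)=a$ and $g(b)=b$, it suffices to prove that every bijective isometry of $X$ fixing both $a$ and $b$ must fix $c$; then $g(c)=c$ forces $\widetilde{\varphi}(c)=\varphi(c)=c$.

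Next I would let $W$ denote the set of all bijective isometries $g\colon X\to X$ with $g(a)=a$ and $g(b)=b$, and set $\lambda:=\sup_{g\in W} d_X(g(c),c)$. This supremum is finite, indeed $\lambda\le 2r$, because for $g\in W$ we have $d_X(g(c),c)\le d_X(g(c),a)+d_X(a,c)=d_X(c,a)+d_X(a,c)=2r$. The heart of the argument is the reflection trick: for $g\in W$ put $g^{*}:=\varphi\circ g^{-1}\circ\varphi\circ g$. A direct computation using $\varphi\circ\varphi=\mathrm{id}$, $\varphi(a)=b$ and $\varphi(b)=a$ shows $g^{*}(a)=a$ and $g^{*}(b)=b$, so $g^{*}\in W$. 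Moreover, applying successively the isometry $\varphi$ (which fixes $c$) and then the isometry $g$, one obtains the displacement identity $d_X(g^{*}(c),c)=d_X(\varphi(g(c)),g(c))$.

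Now I would invoke the $K$-condition. For $g\in W$ the point $x:=g(c)$ satisfies $d_X(a,x)=d_X(g(a),g(c))=r$ and $d_X(\varphi(a),x)=d_X(b,x)=d_X(g(b),g(c))=r=d_X(a,c)$, so $x\in L_{a,c}$. Hence $d_X(\varphi(x),x)\ge K\,d_X(x,c)$, which by the displacement identity reads $d_X(g^{*}(c),c)\ge K\,d_X(g(c),c)$. Taking the supremum over $g\in W$, and using that $g\mapsto g^{*}$ maps $W$ into itself so that the left-hand side is bounded by $\lambda$, yields $\lambda\ge K\lambda$; as $K>1$ and $0\le\lambda<\infty$, this forces $\lambda=0$. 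Thus every $g\in W$ fixes $c$, and in particular $g=\varphi\circ\widetilde{\varphi}$ does, which completes the proof.

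The step I expect to be the real obstacle is precisely this amplification. A single, naive use of the $K$-condition together with the triangle inequality only gives $K\,d_X(x,c)\le d_X(\varphi(x),x)\le 2\,d_X(x,c)$, which is essentially vacuous (it merely recovers $K\le 2$) and never pins down $c$. The decisive idea is to pass to the whole family $W$ and to amplify the displacement at $c$ by the factor $K$ through $g^{*}=\varphi\circ g^{-1}\circ\varphi\circ g$: the identity $d_X(g^{*}(c),c)=d_X(\varphi(g(c)),g(c))$ converts the $K$-expansion of $\varphi$ on $L_{a,c}$ into the self-improving inequality $\lambda\ge K\lambda$. Choosing this auxiliary isometry correctly, and checking $g^{*}\in W$ together with the displacement identity, is the only delicate point; everything else is routine bookkeeping with isometries.
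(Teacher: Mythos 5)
Your proof is correct: transporting $\psi$ back to $X$ via $\widetilde{\varphi}=\Delta^{-1}\circ\psi\circ\Delta$, reducing to the family $W$ of bijective isometries fixing $a$ and $b=\varphi(a)$, and killing the finite supremum $\lambda=\sup_{g\in W}d_X(g(c),c)$ through the amplification $g^{*}=\varphi\circ g^{-1}\circ\varphi\circ g$ (whose displacement at $c$ equals $d_X(\varphi(g(c)),g(c))\geq K\,d_X(g(c),c)$ since $g(c)\in L_{a,c}$) is a complete and valid argument. The paper quotes this theorem from \cite{HatHirMiuMol2012} without reproducing its proof, and your reconstruction is essentially the V\"ais\"al\"a-style Mazur--Ulam reflection argument given in that reference, so there is nothing further to compare.
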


Conditions $B(a,b)$, $C_1(a,b)$ and $C_2(a,b)$ are perfectly applied in \cite{HatHirMiuMol2012} (and subsequently in \cite{HatMol2012}) to establish a generalization of the Mazur-Ulam theorem for commutative groups \cite[Corollary 5.1]{HatHirMiuMol2012}, and to present a metric characterization of normed real-linear spaces among commutative metric groups \cite[Corollary 5.4]{HatHirMiuMol2012}. Despite of the tempting title of \cite{HatHirMiuMol2012} for the audience on Jordan structures--i.e. \emph{``Isometries and maps compatible with inverted Jordan triple products on groups''}--, the results in the just quoted reference have not been applied in a proper Jordan setting yet. There are so many handicaps reducing its potential applicability. We are aimed to present a first application in this paper.\smallskip

For the discussion in this paragraph, let $\mathcal{A}$ be a unital JC$^*$-algebra which will be regarded as a JB$^*$-subalgebra of some $B(H)$. Let us observe that the unit of $\mathcal{A}$ must be a projection $\textbf{1}_{\mathcal{A}}$ in $B(H)$, and thus by replacing $H$ with $\textbf{1}_{\mathcal{A}}(H)$, we can always assume that $\mathcal{A}$ and $B(H)$ share the same unit. We shall denote the product of $B(H)$ by mere juxtaposition. The set $\mathcal{U} (\mathcal{A})$ of all unitaries in $\mathcal{A}$ is not in general a subgroup of $\mathcal{U} (B(H))$ --the latter is not even stable under Jordan products--, however $U_u (v) = u v u,$ and $u^*$ lie in $\mathcal{U} (\mathcal{A})$ for all $u,v\in \mathcal{U} (\mathcal{A})$ (cf. Lemma \ref{l CR Lemma 4.2.41}). The set $\mathcal{U} (B(H))$ is a group for its usual product and will be equipped with the distance provided by the operator norm. Conditions of the type $C_1(a,b)$ do not hold for $(\mathcal{U} (\mathcal{A}),\|.\|)$ because products of the form $a x^{-1} b$ do not necessarily lie in $\mathcal{U} (\mathcal{A})$ for all $a,b,x\in \mathcal{U} (\mathcal{A})$. The set $\mathcal{U} (\mathcal{A})$ is not 2-torsion free since $-1\in \mathcal{U} (\mathcal{A})$. Furthermore, the identity $y x^{-1} y = y^2 x^{-1}$ does not necessarily hold for $x,y\in \mathcal{U} (\mathcal{A})$. We have therefore justified that \cite[Corollaries 3.9, 3.10 and 3.11]{HatHirMiuMol2012} cannot be applied in the Jordan setting, even under more favorable hypothesis of working with a JC$^*$-algebra.\smallskip

Let Iso$(Z)$ denote the group of all surjective linear isometries on a Banach space $Z$. Hidden within the proof of \cite[Theorem 6]{HatMol2012}, it is shown that for a complex Banach spaces $Z$, condition $B(a,b)$ is satisfied for elements $a,b$ in Iso$(Z)$ which are at distance strictly smaller than $\frac12$. Let us concretize the exact statement.

\begin{lemma}\label{l HM hiden in Thm 6}\cite[Proof of Theorem 6]{HatMol2012} Let $Z$ be a complex Banach space, let $u,v$ be two elements in Iso$(Z)$ with $\|u-v\|<\frac12$. Then for $K= 2 - 2 \|u-v\|>1,$ the inequality $$\| v w^{-1} v - w \|\geq K \|w-v\|$$ holds for every $w$ in the set $$L_{u,v} = \left\{ w \in \hbox{Iso}(Z) : \|u-w\| = \| v u^{-1} v- w\| = \|u-v\| \right\}.$$
\end{lemma}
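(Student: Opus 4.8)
The plan is to reduce this two-variable estimate to a one-variable inequality about a single surjective isometry. The key observation is that composition on the left with an element of $\mathrm{Iso}(Z)$ preserves the operator norm: for $w \in \mathrm{Iso}(Z)$ and any bounded operator $T$ on $Z$ we have $\|w^{-1} T\| = \|T\|$, since $\|w^{-1} T x\| = \|T x\|$ for every $x \in Z$. First I would set $s := w^{-1} v \in \mathrm{Iso}(Z)$ and multiply the operator $v w^{-1} v - w$ on the left by $w^{-1}$. Because $w^{-1} v w^{-1} v = s^2$ and $w^{-1} w = \mathrm{Id}$, this gives $\|v w^{-1} v - w\| = \|s^2 - \mathrm{Id}\|$, and the same device yields $\|w - v\| = \|\mathrm{Id} - w^{-1} v\| = \|s - \mathrm{Id}\|$. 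Hence the assertion is equivalent to the single-variable inequality $\|s^2 - \mathrm{Id}\| \geq K \, \|s - \mathrm{Id}\|$.

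Next I would expand $s^2 - \mathrm{Id} = 2 (s - \mathrm{Id}) + (s - \mathrm{Id})^2$, an identity valid because $s$ commutes with $\mathrm{Id}$. The reverse triangle inequality together with submultiplicativity of the operator norm then gives
$$\|s^2 - \mathrm{Id}\| \geq 2 \|s - \mathrm{Id}\| - \|s - \mathrm{Id}\|^2 = \|s - \mathrm{Id}\| \left( 2 - \|s - \mathrm{Id}\| \right).$$
It remains to bound $\|s - \mathrm{Id}\| = \|w - v\|$ from above, and here the membership $w \in L_{u,v}$ enters: since $\|u - w\| = \|u - v\|$, the triangle inequality yields $\|w - v\| \leq \|w - u\| + \|u - v\| = 2 \|u - v\|$. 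Substituting this into the factor $2 - \|s - \mathrm{Id}\|$ gives $2 - \|s - \mathrm{Id}\| \geq 2 - 2 \|u - v\| = K$, whence $\|s^2 - \mathrm{Id}\| \geq K \|s - \mathrm{Id}\| = K \|w - v\|$, as required.

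The only genuinely non-obvious step, and thus the main obstacle, is the opening reformulation: recognizing that left multiplication by the isometry $w^{-1}$ collapses the apparently two-variable expression $v w^{-1} v - w$ into $s^2 - \mathrm{Id}$ for the single isometry $s = w^{-1} v$. Once this substitution is in place the proof is a short, self-contained estimate. I note in passing that the argument invokes only the first defining relation $\|u - w\| = \|u - v\|$ of $L_{u,v}$ (through the bound $\|w - v\| \leq 2 \|u - v\|$) and not the second, and that the admissibility $K > 1$ of the stated constant is precisely the hypothesis $\|u - v\| < \frac{1}{2}$.
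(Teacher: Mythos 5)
Your proof is correct. Note that the paper itself does not reproduce an argument for this lemma but merely quotes it from the proof of Theorem 6 in \cite{HatMol2012}, and your reasoning --- passing to the single isometry $s = w^{-1}v$ via norm-preserving multiplication by $w^{-1}$, expanding $s^{2}-\mathrm{Id} = 2(s-\mathrm{Id})+(s-\mathrm{Id})^{2}$, and using only the first defining condition of $L_{u,v}$ to get $\|w-v\|\leq 2\|u-v\|$ --- is essentially the same computation carried out there, so there is nothing to flag.
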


The next result is a consequence of the previous lemma in the case in which $M$ is a JC$^*$-algebra by just regarding $M$ as a unital Jordan $^*$-subalgebra of some $B(H)$ with the same unit (we can always see $\mathcal{U}(M)$ inside Iso$(H)$). The existence of exceptional JB$^*$-algebras which cannot be embedded as Jordan $^*$-subalgebras of $B(H)$ (see \cite[Corollary 2.8.5]{HOS}, \cite[Example 3.1.56]{Cabrera-Rodriguez-vol1}), forces us to develop a new argument.

\begin{lemma}\label{l B(a,b) holds for a b close enough} Let $u,v$ be two elements in $\mathcal{U} (M)$, where $M$ is a unital JB$^*$-algebra. Suppose $\|u-v\|<1/2$. Then the Jordan version of condition $B(u,v)$ holds for $\mathcal{U} (M)$, that is, \begin{enumerate}[$(a)$]\item For all $x,y\in \mathcal{U} (M)$ we have $\left\|U_{v}(x^{-1})- U_{v} (y^{-1})\right\|= \|x^*-y^*\| = \|x-y\|.$
\item The constant $K= 2 - 2 \|u-v\|>1$ satisfies that $$ \left\| U_v ( w^*)- w \right\|=\left\| U_v ( w^{-1})- w \right\|\geq K \| w- v\|,$$ for all $w$ in the set $$L_{u,v} =\{w\in \mathcal{U} (M)  : \|u-w\| = \| U_{v} (u^{-1})- w\|= \| U_{v} (u^*)- w\| = \|u-v\|\}.$$
\end{enumerate}
\end{lemma}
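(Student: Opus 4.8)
The plan is to read off $(a)$ from the metric behaviour of two standard maps, and to prove $(b)$ by reducing, for each individual $w$, to the Banach-space inequality of Lemma~\ref{l HM hiden in Thm 6}. For $(a)$ I would only use that $U_v\colon M\to M$ is a surjective linear isometry (Lemma~\ref{l CR Lemma 4.2.41}$(d)$), that the involution of a JB$^*$-algebra is isometric, and that $x^{-1}=x^*$ for every unitary $x$; chaining these gives $\|U_v(x^{-1})-U_v(y^{-1})\|=\|x^{-1}-y^{-1}\|=\|x^*-y^*\|=\|x-y\|$. The identity $\|U_v(w^*)-w\|=\|U_v(w^{-1})-w\|$ in $(b)$ is immediate for the same reason, since $w^{-1}=w^*$.

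For $(b)$ I would fix $w\in L_{u,v}$ and put $\eta=\|u-v\|<\frac12$. The heart of the argument is to realise $u$ and $w$ simultaneously inside one associative algebra. First I pass to the $v$-isotope $M(v)=(M,\circ_v,*_v)$, in which $v$ is the unit, $U_v(x^*)=\{v,x,v\}=x^{*_v}$, and $u,w$ are still unitaries (Lemma~\ref{l CR Lemma 4.2.41}$(b),(c)$). Since $\|u-v\|=\eta<2$ and $\|w-v\|\leq\|w-u\|+\|u-v\|=2\eta<2$, Lemma~\ref{l untaries at short distance in a unital JB$^*$-algebra}$(a)$ produces $h,k\in M(v)_{sa}$ with $u=e^{ih}$ and $w=e^{ik}$, the exponentials taken in $M(v)$.

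Now the two generators $h$ and $k$ are \emph{self-adjoint}, so the Shirshov--Cohn theorem applies: the JB$^*$-subalgebra $\mathcal{B}$ of $M(v)$ generated by $h,k$ and $v$ is a JC$^*$-algebra, and I may fix an isometric Jordan $^*$-monomorphism $\pi\colon\mathcal{B}\to B(H)$ with $\pi(v)=\11b{B(H)}$. Then $\pi(u)=e^{i\pi(h)}$ and $\pi(w)=e^{i\pi(k)}$ are honest unitary operators, i.e. elements of $\mathrm{Iso}(H)$ (because $\pi(h),\pi(k)$ are self-adjoint), and, $\pi$ being a triple isomorphism onto its image, $\pi(U_v(w^*))=\{\pi(v),\pi(w),\pi(v)\}=\pi(w)^{*}=\pi(w)^{-1}$ while all the relevant norms are preserved. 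In particular the two genuine constraints cutting out $L_{u,v}$, namely $\|u-w\|=\eta$ and $\|U_v(u^*)-w\|=\eta$, translate into $\|\pi(u)-\pi(w)\|=\|\pi(u)^{-1}-\pi(w)\|=\eta=\|\pi(u)-\11b{B(H)}\|$, i.e. $\pi(w)$ lies in the set $L_{\pi(u),\11b{B(H)}}$ of Lemma~\ref{l HM hiden in Thm 6} with $Z=H$. Applying that lemma to the pair $\pi(u),\11b{B(H)}$ (admissible since $\|\pi(u)-\11b{B(H)}\|=\eta<\frac12$) yields $\|\pi(w)^{-1}-\pi(w)\|\geq K\|\pi(w)-\11b{B(H)}\|$ with $K=2-2\eta$, and undoing $\pi$ gives exactly $\|U_v(w^*)-w\|\geq K\|w-v\|$. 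Note that this reduction is uniform: it covers the exceptional case as well as the JC$^*$ case, so no separate treatment is needed.

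The step I expect to be delicate, and the reason a C$^*$-algebra proof does not transcribe verbatim, is precisely the reduction above: an exceptional $M$ admits no embedding into any $B(H)$, so $u$ and $w$ cannot be viewed as operators at the level of $M$. What rescues the argument is that their logarithms $h,k$ in the $v$-isotope are self-adjoint, which is exactly the hypothesis of the Shirshov--Cohn theorem and hence packages $u,w,v$ and all the triple products formed from them into a single JC$^*$-algebra. The remaining points are routine but must be handled with care: the bound $\|w-v\|<2$ guaranteeing the existence of $k$, the identification $U_v(w^*)=\{v,w,v\}$ so that $U_v(w^*)\in\mathcal{B}$ and $\pi$ may legitimately be applied to it, and the verification that each of the defining equalities of $L_{u,v}$ survives the passage to the isotope and through $\pi$.
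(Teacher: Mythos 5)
Your proof is correct and follows essentially the same route as the paper's: pass to an isotope so that one of the three unitaries becomes a unit, take logarithms via Lemma \ref{l untaries at short distance in a unital JB$^*$-algebra}$(a)$, apply the Shirshov--Cohn theorem to the JB$^*$-subalgebra generated by the two self-adjoint logarithms and the unit, transfer the problem to $B(H)$, invoke Lemma \ref{l HM hiden in Thm 6}, and pull the inequality back by uniqueness of the triple product. The only difference is cosmetic: you work in the $v$-isotope, so that the Banach-space lemma is applied to the pair $\left(\pi(u),\mathbf{1}_{B(H)}\right)$ and yields $\|\pi(w)^{-1}-\pi(w)\|\geq K\|\pi(w)-\mathbf{1}_{B(H)}\|$, whereas the paper works in the $u$-isotope, making $u$ the unit of $B(H)$ and applying the lemma to the pair $\left(\mathbf{1}_{B(H)},v\right)$; both reductions are valid.
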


\begin{proof} Statement $(a)$ is clear from Lemma \ref{l CR Lemma 4.2.41}$(d)$ and the fact that the involution on $M$ is an isometry.\smallskip

Let us consider the $u$-isotope JB$^*$-algebra $M(u)= (M,\circ_u,*_u)$ of $M$. The $U$-operator on $M(u)$ will be denoted by $U^u$. We fix an element $w\in L_{u,v}$. Since $\|u-w\| = \|u-v\|< \frac12,$ we deduce from Lemma \ref{l untaries at short distance in a unital JB$^*$-algebra} the existence of two self-adjoint elements $h_1,h_2\in M(u)$ such that $v= e^{ih_1}$ and $w= e^{ih_2}$. Let $\mathcal{B}$ denote the JB$^*$-subalgebra of $M(u)$ generated by $u,h_1,h_2$. The Shirshov-Cohn theorem assures the existence of a complex Hilbert space $H$ such that $\mathcal{B}$ is a JB$^*$-subalgebra of $B(H)$ and both share the same unit $u$ (the product of $B(H)$ will be denoted by mere juxtaposition and the involution by $\sharp$). Obviously $u,v,w\in \mathcal{U} (\mathcal{B})\subseteq \mathcal{U} (B(H))\subseteq \hbox{Iso}(H)$ with $\|u-w\|_{\mathcal{B}} = \|u-w\|_{M}= \|u-v\|_{M}= \|u-v\|_{\mathcal{B}}$. Let us compute $$\begin{aligned} U_{v}^u (u) &= 2 (v\circ_u u)\circ_u v - (v\circ_u v)\circ_u u 
\\
&= 2 v\circ_u v - v\circ_u v = v\circ_u v = \{v,u,v\} = U_v(u^*),
\end{aligned}$$
 and
$$\begin{aligned} \| v u^{-1} v- w\|_{_{B(H)}}&= \| v u^{-1} v- w\|_{\mathcal{B}} = \| U_{v}^u (u^{\sharp}) - w\|_{\mathcal{B}} = \| U_{v}^u (u^{*_u}) - w\|_{\mathcal{B}} \\
&= \| U_{v}^u (u) - w\|_{\mathcal{B}} = \| U_{v} (u^*)- w\|_{\mathcal{B}}= \| U_{v} (u^*)- w\|_{M}.
\end{aligned} $$

Lemma \ref{l HM hiden in Thm 6} proves that for $K= 2 - 2 \|u-v\|>1$ we have $$\left\| U^u_v ( w^{*_u})- w \right\|_{\mathcal{B}} = \left\| U^u_v ( w^{-1})- w \right\|_{\mathcal{B}} = \| v w^{-1} v - w \|_{_{B(H)}} \geq K \|w-v\|_{\mathcal{B}} = K \|w-v\|.$$
On the other hand, by the uniqueness of the triple product (see \cite[Proposition 5.5]{Ka83} or Lemma \ref{l CR Lemma 4.2.41}$(c)$) we have
$$ U^u_v ( w^{*_u}) = \{v,w,v\}_{\mathcal{B}} =  \{v,w,v\}_{M(u)} =  \{v,w,v\}_{M} = U_v(w^*).$$ All together gives $$\begin{aligned}\left\| U_v ( w^{-1})- w  \right\|&=\left\|U_v ( w^*)- w \right\|_{M} =  \left\| U_v ( w^*)- w \right\|_{\mathcal{B}} \\
&= \left\| U^u_v ( w^{*_u})- w \right\|_{\mathcal{B}}\geq K \| w- v\|,
\end{aligned}$$ which completes the proof.
\end{proof}

We can now establish a key result for our goals.

\begin{theorem}\label{t Delta preserves inverted triple products in UM} Let $\Delta: \mathcal{U} (M)\to \mathcal{U} (N)$ be a surjective isometry, where $M$ and $N$ are unital JB$^*$-algebras. Suppose $u,v\in \mathcal{U} (M)$ with $\|u-v\|<\frac12$. Then the following statements are true:
\begin{enumerate}[$(1)$]\item The Jordan version of condition $B(u,v)$ holds for $\mathcal{U} (M)$;
\item The Jordan version of condition $C_2 (\Delta(u),\Delta(U_v (u^{*})))$ holds for $\mathcal{U} (N)$;
\item The identity $\Delta(U_{v} (u^{*})) =\Delta(U_{v} (u^{-1})) = U_{\Delta(v)} (\Delta(u)^*)= U_{\Delta(v)} (\Delta(u)^{-1})$ holds.
\end{enumerate}
\end{theorem}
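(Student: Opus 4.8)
The plan is to prove the three assertions in order, the first being immediate and the last two forming the genuine core. Statement $(1)$ is nothing more than Lemma~\ref{l B(a,b) holds for a b close enough}: since $\|u-v\|<\frac12$, parts $(a)$ and $(b)$ of that lemma are precisely conditions $(B.1)$ and $(B.2)$ for the pair $(u,v)$, with constant $K=2-2\|u-v\|>1$. Before addressing $(2)$ and $(3)$ it is convenient to package the relevant self-maps as isotope involutions. Define $\varphi:\mathcal{U}(M)\to\mathcal{U}(M)$ by $\varphi(x)=U_{v}(x^{*})$; by Lemma~\ref{l CR Lemma 4.2.41}$(a)$ this is exactly the involution $*_{v}$ of the isotope $M(v)$, so $\varphi$ is a distance-preserving involution (the isometry being $(B.1)$) which fixes $v$, since $\varphi(v)=\{v,v,v\}=v$. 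In particular $\|u-\varphi(u)\|\le\|u-v\|+\|\varphi(v)-\varphi(u)\|=2\|u-v\|<1$.

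For $(2)$, recall that condition $C_{2}(\Delta(u),\Delta(U_{v}(u^{*})))$ asks for a unitary $c\in\mathcal{U}(N)$ with $U_{c}(\Delta(u)^{*})=\Delta(U_{v}(u^{*}))$ and with $y\mapsto U_{c}(y^{*})$ distance preserving. The distance-preserving requirement is automatic for every unitary $c$ by Lemma~\ref{l CR Lemma 4.2.41}$(d)$ together with the isometry of the involution, so only the existence of $c$ matters. Here the estimate above pays off: since $\Delta$ is an isometry, $\|\Delta(u)-\Delta(U_{v}(u^{*}))\|=\|u-\varphi(u)\|<1<2$, and Lemma~\ref{l untaries at short distance in a unital JB$^*$-algebra}$(b)$ produces a unitary $c\in N$ with $U_{c}(\Delta(u)^{*})=\Delta(U_{v}(u^{*}))$. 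Using the sharpened ``moreover'' clause of that lemma I would in addition take $c$ with $\|c-\Delta(u)\|\le\|\Delta(u)-\Delta(U_{v}(u^{*}))\|<1$, whence $\|c-\Delta(v)\|\le\|c-\Delta(u)\|+\|u-v\|<\tfrac32$; this bound is recorded for use in $(3)$.

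For $(3)$ I would close the loop through the Mazur--Ulam-type Theorem~\ref{t HatHirMiuMol Thm 24 for metric spaces}. Set $\psi:\mathcal{U}(N)\to\mathcal{U}(N)$, $\psi(y)=U_{c}(y^{*})=y^{*_{c}}$, the involution of the isotope $N(c)$; it is a surjective distance-preserving involution. By the choice of $c$ we have $\psi(\Delta(u))=\Delta(U_{v}(u^{*}))=\Delta(\varphi(u))$, and applying $\psi$ once more, $\psi(\Delta(\varphi(u)))=\psi(\psi(\Delta(u)))=\Delta(u)$. Thus Theorem~\ref{t HatHirMiuMol Thm 24 for metric spaces}, applied with $X=\mathcal{U}(M)$, $Y=\mathcal{U}(N)$, the points $a=u$ and (in the notation of that theorem) $c=v$, the involution $\varphi$, the surjective isometry $\Delta$ and the map $\psi$ --- the required contraction hypothesis on $L_{u,v}$ being exactly $(B.2)$ from $(1)$ --- yields $\psi(\Delta(v))=\Delta(v)$, that is, $U_{c}(\Delta(v)^{*})=\Delta(v)$. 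Since $\|c-\Delta(v)\|<\tfrac32<2$, Lemma~\ref{l C} (with its roles played by $\Delta(v)$ and $c$) forces $c=\Delta(v)$. Therefore $U_{\Delta(v)}(\Delta(u)^{*})=\Delta(U_{v}(u^{*}))$, and the remaining equalities in $(3)$ follow at once because $x^{*}=x^{-1}$ for every unitary.

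The main obstacle is precisely this last identification. One cannot define $\psi$ directly as $U_{\Delta(v)}(\cdot^{*})$, since that would presuppose the very identity $(3)$; instead one is forced to work with the auxiliary unitary $c$ produced by Lemma~\ref{l untaries at short distance in a unital JB$^*$-algebra}$(b)$, whose only a priori link to $\Delta(v)$ is a metric one. The role of Theorem~\ref{t HatHirMiuMol Thm 24 for metric spaces} is to convert the contraction property $(B.2)$ into the single fixed-point equation $\psi(\Delta(v))=\Delta(v)$, and only then can Lemma~\ref{l C} collapse $c$ onto $\Delta(v)$ --- which is exactly why the quantitative closeness estimate from the ``moreover'' clause (guaranteeing $\|c-\Delta(v)\|<2$) is indispensable. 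Realizing both $\varphi$ and $\psi$ as genuine isotope involutions, rather than as formal expressions, is what makes the two compatibility conditions $\psi\circ\Delta=\Delta\circ\varphi$ on $\{u,\varphi(u)\}$ fall out for free from involutivity.
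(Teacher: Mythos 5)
Your proof is correct and follows essentially the same route as the paper: the same auxiliary unitary produced by Lemma \ref{l untaries at short distance in a unital JB$^*$-algebra}$(b)$ applied to $\Delta(u)$ and $\Delta(U_v(u^*))$, the same pair of involutions $\varphi(x)=U_v(x^*)$ and $\psi(y)=U_c(y^*)$ fed into Theorem \ref{t HatHirMiuMol Thm 24 for metric spaces}, and the same final collapse of the auxiliary unitary onto $\Delta(v)$ via Lemma \ref{l C}. The only (minor, and welcome) difference is your estimate $\|u-\varphi(u)\|\leq \|u-v\|+\|\varphi(v)-\varphi(u)\|=2\|u-v\|<1$, obtained purely from the triangle inequality and the fact that $\varphi$ is an isometry fixing $v$, whereas the paper derives the same bound by embedding the subalgebra generated by $u$ and $v$ into some $B(H)$ via the Shirshov--Cohn theorem.
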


\begin{proof} Statement $(1)$ follows from Lemma \ref{l B(a,b) holds for a b close enough}.\smallskip

$(2)$ By hypotheses $\| \Delta(u) - \Delta(v)\| =\|u-v\|<\frac12$. Let $\mathcal{B}$ denote the JB$^*$-subalgebra of the $u$-isotope $M(u)$ generated by $u$ and $v$. We shall denote by $U^u$ the $U$-operator in $M(u)$. Again, by the Shirshov-Cohn theorem we can find a complex Hilbert space $H$ such that $\mathcal{B}$ is a JB$^*$-subalgebra of $B(H)$ and $u$ is the unit of $B(H)$. The product of $B(H)$ will be denoted by mere juxtaposition and the involution by $\sharp$. In this case we have
$$\begin{aligned} \| \Delta(u) - \Delta(U_v (u^*))\| &=  \| u - U_v (u^*)\| =   \| u - \{ v, u,v\}\| = \| u - \{ v, u,v\}_{\mathcal{B}}\|_{\mathcal{B}}  \\
&=\| u - U^u_v (u^{*_u})\|_{\mathcal{B}} =  \| u - U_v^u (u^{*_{u}})\|_{B(H)} =  \| u - v u^{\sharp} v\|_{B(H)} \\
&=  \| u - v v\|_{_{B(H)}} =  \| v^{\sharp} - v\|_{_{B(H)}} \leq   \| v^{\sharp} - u \|_{_{B(H)}} + \| u- v\|_{_{B(H)}} \\
&= 2 \| u - v\|_{B(H)}= 2 \| u - v\|_{\mathcal{B}} =  2 \| u - v\| < 1
\end{aligned}$$ (cf. Lemma \ref{l CR Lemma 4.2.41}$(c)$).\smallskip

Lemma \ref{l untaries at short distance in a unital JB$^*$-algebra}$(b)$ assures the existence of $w\in \mathcal{U} (N)$ satisfying \begin{equation}\label{eq birth of w in the proof of the theorem} U_w (\Delta(u)^*) = \Delta\left( U_v (u^*) \right), \hbox{ and } \|w- \Delta(v) \| < 1 \hbox{ (or smaller)}.
 \end{equation} This shows that the Jordan version of $C_2 (\Delta(u),\Delta(U_u (v^{*})))$ holds for $\mathcal{U} (N)$ because the remaining requirement, i.e., $$\| U_w (x^{-1}) -U_w (y^{-1})\| = \|x^{-1}- y^{-1}\| = \|x^* - y^* \| = \|x-y\|$$ holds for all $x,y\in \mathcal{U} (N)$ (cf. Lemma \ref{l CR Lemma 4.2.41}$(d)$).\smallskip

$(3)$ Let $w\in \mathcal{U} (N)$ be the element found in the proof of $(2)$. We define a couple of mappings $\varphi: \mathcal{U} (M)\to \mathcal{U} (M)$ and $\psi: \mathcal{U} (N)\to \mathcal{U} (N)$ given by $\varphi (x) := U_{v} (x^{-1}) = U_{v} (x^{*})$ and $\psi (y) := U_{w} (y^{-1}) = U_{w} (y^{*})$, respectively. Clearly, $\varphi$ and $\psi$ are distance preserving bijections (cf. Lemma \ref{l CR Lemma 4.2.41}$(d)$).\smallskip

It is clear that $\varphi (v) = v$ and $\varphi\circ \varphi$ is the identity mapping on $\mathcal{U} (M)$. Furthermore, by \eqref{eq birth of w in the proof of the theorem} we have $$\begin{aligned}
\psi (\Delta(u)) &= U_w (\Delta(u)^*) = \Delta\left( U_v (u^*) \right) =\Delta\left( \varphi (u) \right) \\
\psi \left(\Delta\left( \varphi (u) \right)\right) &= U_{w} \left(\Delta \left( U_{v} (u^*)^*\right) \right)= U_{w^*} \left(\Delta \left( U_{v} (u^*)\right)\right)^* = \Delta(u)^{**} = \Delta (u).
\end{aligned}$$

Since by $(1)$ the Jordan version of condition $B(u,v)$ holds for $\mathcal{U} (M)$, and by $(2)$ the Jordan version of $C_2 (\Delta(u),\Delta(U_v (u^{*})))$ holds for $\mathcal{U} (N)$, we can see that all the hypotheses of Theorem \ref{t HatHirMiuMol Thm 24 for metric spaces} \cite[Theorem 2.4]{HatHirMiuMol2012} are satisfied. We deduce from the just quoted theorem that $U_{w} (\Delta(v)^*) = \Delta(v),$ and since $\|w -\Delta(v)\|<2$ (cf. \eqref{eq birth of w in the proof of the theorem}), Lemma \ref{l C} guarantees that $w=\Delta(v)$ and hence
$\Delta(U_{v} (u^{*}))  = U_{\Delta(v)} (\Delta(u)^*)$ as desired (see \eqref{eq birth of w in the proof of the theorem}).
\end{proof}

\section{Surjective isometries between sets of unitaries}

In this section we shall try to find a precise description of the surjective isometries between the sets of unitaries in two unital JB$^*$-algebras. Our first goal is to find conditions under which any such surjective isometry can be extended to a surjective real linear isometry between these JB$^*$-algebras.\smallskip

We recall that a \emph{one-parameter group} of bounded linear operators on a Banach space $Z$ is a mapping $\mathbb{R} \to B(Z),$ $t\mapsto E(t)$ satisfying $E(0) =I$ and $E({t+s}) = E({s}) E({t}),$ for all $s,t\in \mathbb{R}$. A one-parameter group $\{ E(t) : t\in \mathbb{R}\}$ is uniformly continuous at the origin if $\displaystyle \lim_{t\to 0} \| E(t) -I\| =0$. It is known that being uniformly continuous at zero is equivalent to the existence of a bounded linear operator $R\in B(Z)$ such that $E(t) =e^{t R}$ for all $t\in \mathbb{R}$, where the exponential is computed in the Banach algebra $B(Z)$ (see, for example, \cite[Proposition 3.1.1]{BratRob1987}). A one-parameter group on $\{E(t) : t\in \mathbb{R}\}$ on a complex Hilbert space $H$ is called \emph{strongly continuous} if for each $\xi$ in $H$ the mapping $t\mapsto  E({t}) (\xi)$ is continuous (\cite[Definition 5.3, Chapter X]{Conway}). A \emph{one-parameter unitary group} on $H$ is a one-parameter group on $H$ such that $E(t)$ is a unitary element for each $t\in \mathbb{R}$. \smallskip

The celebrated Stone's one-parameter theorem affirms that for each strongly continuous one-parameter unitary group $\{E(t) : t\in \RR \}$ on a complex Hilbert space $H$ there exists a self-adjoint operator $h\in B(H)$ such that $E(t)=e^{i t h}$, for every $t\in \RR$ (\cite[5.6, Chapter X]{Conway}).\smallskip

We recall that a \emph{triple derivation} on a JB$^*$-triple $E$ is a linear mapping $\delta: E\to E$ satisfying a ternary version of Leibniz' rule $$\delta \{a,b,c\} = \{\delta(a),b,c\} + \{a, \delta(b), c\} + \{a,b,\delta(c)\}, \ \ (a,b,c\in E).$$ We shall apply that every triple derivation is automatically continuous (see \cite[Corollary 2.2]{BarFri90}). If $\delta: M\to M$ is a triple derivation on a unital JB$^*$-algebra, it is known that $\delta(\11bM)^* = - \delta(\11bM)$, that is, $i \delta(\11bM)\in M_{sa}$ (cf. \cite[Proof of Lemma 1]{HoMarPeRu}).\smallskip

The existence of exceptional JB$^*$-algebra which cannot be represented inside a $B(H)$ space, adds extra difficulties to apply Stone's one-parameter theorem. The study of uniformly continuous one-parameter groups of surjective isometries (i.e. triple isomorphisms), Jordan $^*$-isomorphisms and orthogonality preserving operators on JB$^*$-algebras has been recently initiated in \cite{GarPe20}. We complement the results in the just quoted reference with the next result, which is a Jordan version of Stone's theorem for uniformly continuous unitary one-parameter groups.

\begin{theorem}\label{t Jordan unitary groups version of Stone's theorem} Let $M$ be a unital JB$^*$-algebra. Suppose $\{u(t):t\in \mathbb{R}\}$ is a family in $\mathcal{U} (M)$ satisfying $u(0) =\textbf{1},$ and $U_{u(t)} (u(s)) = u(2 t +s),$ for all $t,s\in \mathbb{R}$. We also assume that the mapping $t\mapsto u(t)$ is continuous. Then there exists $h\in M_{sa}$ such that $u(t) = e^{i t h}$ for all $t\in \mathbb{R}$.
\end{theorem}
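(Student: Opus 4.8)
The plan is to reduce the statement to the classical Stone theorem applied inside a single commutative C$^*$-algebra, by showing that the entire family $\{u(t)\}$ lies in an associative (hence commutative) JB$^*$-subalgebra of $M$. As preliminary algebraic consequences of the hypothesis $U_{u(t)}(u(s)) = u(2t+s)$, I would first take $s=0$ and use $U_{u(t)}(\mathbf{1}) = 2(u(t)\circ \mathbf{1})\circ u(t) - u(t)^2\circ \mathbf{1} = u(t)^2$ to obtain $u(2t)=u(t)^2$. An induction based on the identity $U_{u(t)}(u((n-2)t)) = u(nt)$, together with the fact that $U_a(a^{n-2}) = a^n$ once the intermediate powers are known to lie in the associative subalgebra generated by $u(t)$, then yields $u(nt) = u(t)^n$ for every integer $n$, where $u(t)^{-1} = u(t)^*$ because $u(t)$ is unitary.

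The key step is to exhibit one commutative C$^*$-algebra containing every $u(t)$. For each $N\in\mathbb{N}$ the closed JB$^*$-subalgebra $\mathcal{A}_N$ of $M$ generated by $u(2^{-N})$ and $\mathbf{1}$ is associative, hence, being a singly generated unital associative JB$^*$-algebra, it is isometrically Jordan $^*$-isomorphic to a commutative C$^*$-algebra (as in the proof of Lemma \ref{l untaries at short distance in a unital JB$^*$-algebra}). Since $u(2^{-m}) = u(2^{-N})^{2^{N-m}}$ for $m\leq N$, the subalgebras $\mathcal{A}_N$ are nested and increasing, so their closed union $\mathcal{A}_\infty := \overline{\bigcup_N \mathcal{A}_N}$ is again a commutative C$^*$-algebra, and every dyadic value $u(k2^{-n}) = u(2^{-n})^k$ lies in $\mathcal{A}_\infty$. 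As the dyadic rationals are dense in $\mathbb{R}$, the map $t\mapsto u(t)$ is continuous, and $\mathcal{A}_\infty$ is closed, I conclude that $u(t)\in \mathcal{A}_\infty$ for every $t\in\mathbb{R}$.

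It then remains to argue entirely inside $\mathcal{A}_\infty \cong C(\Omega)$. There the $U$-operator is $U_a(b)=a^2 b$, so the hypothesis reads $u(t)^2 u(s) = u(2t+s)$; combined with $u(t)^2 = u(2t)$ this gives the genuine one-parameter group law $u(a)u(s) = u(a+s)$ for all $a,s\in\mathbb{R}$. Thus $t\mapsto u(t)$ is a norm-continuous one-parameter group of unitaries in the commutative unital C$^*$-algebra $\mathcal{A}_\infty$, and by the characterization of uniformly continuous one-parameter groups (\cite[Proposition 3.1.1]{BratRob1987}) it equals $e^{tR}$ with $R = \lim_{t\to 0} (u(t)-\mathbf{1})/t \in \mathcal{A}_\infty$. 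Since each $u(t)$ is unitary one has $R^* = -R$, so $R = ih$ with $h=h^*\in (\mathcal{A}_\infty)_{sa}\subseteq M_{sa}$, which gives $u(t)=e^{ith}$ as desired.

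The main obstacle is precisely the commutativity step: a priori the unitaries $u(t)$ need not operator commute, and the relation $U_{u(t)}(u(s))=u(2t+s)$ is genuinely weaker than a one-parameter group law in the non-associative setting. The idea that resolves it is that every coarser dyadic value is a Jordan power of a finer one, so the dyadic subfamily automatically generates a singly generated, hence commutative, C$^*$-algebra at each scale; the continuity of $t\mapsto u(t)$ then transports this commutativity to the full real parameter, after which the problem becomes the classical commutative one.
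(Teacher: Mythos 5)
Your proposal is correct, but it takes a genuinely different route from the paper's. The paper first upgrades the hypothesis to the Jordan group law $u(t+s)=u(t)\circ u(s)$ (integer powers, then rationals via power associativity, then continuity), and then works globally on $M$: it considers $\Phi(t)=U_{u(t)}$, proves this is a uniformly continuous one-parameter group of surjective linear isometries, invokes \cite[Lemma 3.1]{GarPe20} to write $\Phi(t)=e^{t\delta}$ for a triple derivation $\delta$, establishes $\delta^{n}(\mathbf{1})=\delta(\mathbf{1})^{n}$ by differentiating $w(t+s)=w(t)\circ w(s)$ where $w(t)=\Phi(t)(\mathbf{1})=u(2t)$, and concludes $u(t)=e^{t\delta(\mathbf{1})/2}$ with $\delta(\mathbf{1})^{*}=-\delta(\mathbf{1})$ by \cite{HoMarPeRu}. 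You instead localize: the integer-power identities make the singly generated subalgebras $\mathcal{A}_{N}$ nest along dyadic scales, their closed union is a commutative C$^*$-algebra containing all dyadic values, and continuity plus density pulls every $u(t)$ into it, after which the statement collapses to the classical uniformly continuous one-parameter group theorem in a commutative C$^*$-algebra. Your route is more elementary and self-contained: it avoids the triple-derivation machinery and the recent results of \cite{GarPe20} entirely, and it yields the extra information that the whole family $\{u(t):t\in\mathbb{R}\}$ sits inside one commutative C$^*$-subalgebra of $M$. What the paper's route buys is that it never needs facts about generated subalgebras, it produces the group law $u(t+s)=u(t)\circ u(s)$ and the generator $\delta$ in $M$ itself as byproducts, and it ties the theorem to the wider study of one-parameter groups of isometries on JB$^*$-algebras. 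Two details you should spell out to make your argument airtight: first, $u(-t)=u(t)^{*}$ (needed to place the negative dyadic values in $\mathcal{A}_{\infty}$) is not a formal consequence of unitarity alone; it follows, for instance, from $U_{u(t)}(u(-2t))=u(0)=\mathbf{1}$ together with $U_{u(t)}^{-1}=U_{u(t)^{*}}$, exactly as in the paper. Second, the associativity of the closed JB$^*$-subalgebra generated by the unitary $u(2^{-N})$, its adjoint and $\mathbf{1}$ is precisely the fact used and referenced in the proof of Lemma \ref{l untaries at short distance in a unital JB$^*$-algebra} (applied with the isotope taken at the unit itself), so your appeal to that lemma is the right citation, but it is worth stating that this is where the commutativity really comes from.
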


\begin{proof} We shall first prove that \begin{equation}\label{eq first u is a one-parameter group}\hbox{$u(s+t) = u(t) \circ u(s)$ for all $t,s\in \mathbb{R}$.}
 \end{equation} Fix a real $t$, it follows from the hypothesis that   $$u(t)^2=U_{u(t)}(u(0))=u(2 t), \hbox{ and } u(t)^3=U_{u(t)} (u(t))= u(3 t).$$
Arguing by induction on $n$, it can be established that $u(t)^n = u(n t)$ for all $n\in \mathbb{N}$, $t\in \mathbb{R}$. Indeed, for any integer $n\geq 4$, by the induction hypothesis
$$u(t)^n= U_{u(t)} (u(t)^{n-2})= U_{u(t)}(u((n-2) t)) = u(n t).$$

The identity $U_{u(t)} U_{u(-t)} U_{u(t)} = U_{u(t)}$ (see \eqref{eq fundamental identity UaUbUa}) together with the fact that $U_{u(t)}$ is invertible in $B(M)$ proves that $U_{u(t)^*} =U_{u(t)}^{-1} = U_{u(-t)}$ and thus $u(t)^* = u(t)^{-1}  = u(-t)$ for all $t\in \mathbb{R}$ (cf. \cite[Theorem 4.1.3]{Cabrera-Rodriguez-vol1}).\smallskip

Given an integer $n\leq 0$ we observe that $u(t)^n={(u(t)^{-n})^{-1}}={(u(t)^{-n})}^*$ for every ${t\in \RR}$, and thus $$u(t)^n = (u(t)^{-n})^* = u((-n) t)^* = u(n t).$$ We have therefore shown that
\begin{equation}\label{eq u(t) preserves powers} u(t)^n = u(n t), \hbox{ for all } {t\in \RR} \hbox{ and } n\in \mathbb{Z}.
\end{equation}

By the continuity of the mapping $t\mapsto u(t)$, in order to prove \eqref{eq first u is a one-parameter group} it suffices to show that the identity $u(r+r')= u(r)\circ u(r')$ holds for any rational numbers $r$ and $r'$. Therefore, let us take $r=n/m$ and $r'=n'/m'$, with $n,n'\in \mathbb{Z}$ and $m,m'\in \mathbb{N}$. By a couple of applications of \eqref{eq u(t) preserves powers} and the power associativity of $M$ (see \cite[Lemma 2.4.5]{HOS}) we have
$$\begin{aligned} u(r+r') &= u\left(\frac{nm'+mn'}{mm'}\right) = u\left(\frac{1}{mm'}\right)^{nm'+mn'}= u\left(\frac{1}{mm'}\right)^{n m'} \circ u\left(\frac{1}{mm'}\right)^{m n'}\\
&= u\left(\frac{n m'}{mm'}\right) \circ u\left(\frac{m n'}{mm'}\right) =u(r)\circ u(r'),
\end{aligned}$$ as desired.\smallskip

Let us define a mapping $\Phi: \mathbb{R}\to \hbox{Iso} (M)$, $t\mapsto \Phi(t) = U_{u(t)}$. Clearly, $\Phi$ is continuous with $\Phi(0) = Id_{M}$. By applying the fundamental identity \eqref{eq fundamental identity UaUbUa} one sees that $$\Phi(t) \Phi(s) \Phi(t) =  U_{u(t)} U_{u(s)} U_{u(t)} = U_{U_{u(t)}(u(s))} = U_{u(2t+s)} = \Phi(2 t+s),$$ for all $s,t\in \mathbb{R}$. It then follows that $\Phi (t)^2 = \Phi(t) \Phi(0) \Phi(t) = \Phi(2 t)$, $\Phi (3 t) = \Phi(t)^3$, and by induction on $n(\geq 3)$, $\Phi( n t) =\Phi(2t + (n-2) t) =\Phi(t) \Phi((n-2) t) \Phi(t)= \Phi(t) \Phi(t)^{n-2} \Phi(t) = \Phi (t)^n$ for all $t\in \mathbb{R}$, $n\in \mathbb{N}$. Therefore $\Phi( n t) = \Phi(t)^{n}$ for all $t\in \mathbb{R}$, $n\in \mathbb{Z}$.  \smallskip

Since, for each real $t$, $\Phi (t) \Phi (-t) \Phi (t) = \Phi (t)$ and $\Phi(t)\in \hbox{Iso} (M)$ we can deduce that $\Phi(-t) = \Phi(t)^{-1}$ for all $t\in \mathbb{R}$. It follows that, for a negative integer $n$ and each real $t$, we have $$\Phi(n t) = \Phi ((-n) (-t))= \Phi(-t)^{-n} = \Phi (t)^n,$$ an identity which then holds for all $n\in \mathbb{Z}$.\smallskip

We claim that $t\mapsto \Phi (t)$ is a one-parameter group of surjective isometries on $M$. Let us fix two rational numbers $\frac{n}{m}, \frac{n'}{m'}$ with $m,m'\in \mathbb{N}$, $n,n'\in \mathbb{Z}$. It follows from the above properties that $$\Phi \left(\frac{n}{m} +\frac{n'}{m'}\right) = \Phi \left(\frac{n m' + n' m}{m m'}\right) = \Phi \left(\frac{1}{m m'}\right)^{n m' + n' m}  $$ $$= \Phi \left(\frac{1}{m m'}\right)^{n m' }  \Phi \left(\frac{1}{m m'}\right)^{n' m} = \Phi \left(\frac{n}{m}\right)  \Phi \left(\frac{n'}{m'}\right).$$ It follows from the continuity of $\Phi$ that $$\Phi (t+s) = \Phi(t) \Phi(s), \hbox{ for all } t,s\in \mathbb{R},$$ that is, $\Phi (t)$ is a uniformly continuous one-parameter group of surjective isometries on $M$. By \cite[Lemma 3.1]{GarPe20} there exists a triple derivation $\delta: M\to M$ satisfying $\displaystyle \Phi (t) = e^{t \delta}= \sum_{n=0}^{\infty} \frac{t^n}{n!} \delta^n$ for all $t\in \mathbb{R}$, where the exponential is computed in $B(M)$.\smallskip

Let us observe that $w(t):=\Phi(t) (\textbf{1}) = U_{u(t)} (\textbf{1}) = u(t)^2 = U_{u(t)} (u(0))= u(2 t)$ for all $t\in \mathbb{R}$, and hence by \eqref{eq first u is a one-parameter group} $$w(t+s)=\Phi(t+s) (\textbf{1}) = u(2(t+s)) = u(2 t) \circ u(2 s)$$ $$ = \Phi(t) (\textbf{1}) \circ \Phi(s) (\textbf{1})= w(t) \circ w(s),$$ for all $s,t\in \mathbb{R}$. We shall next show that $$\delta(\textbf{1})^n = \delta^n(\textbf{1}), \hbox{ for all natural } n.$$ Since $w(t+s) = w(t) \circ w(s)$, by taking derivatives in $t$ at $t=0$ we get $$\sum_{n=1}^{\infty} \frac{s^{n-1}}{(n-1)!} \delta^{n} (\textbf{1})= \frac{\partial}{\partial t}_{|_{{t=0}}} w(t+s) = \frac{\partial}{\partial t}_{|_{t=0}} w(t)\circ w(s) = \delta(\textbf{1}) \circ w(s),$$ for all $s\in \mathbb{R}$.  Taking a new derivative in $s$ at $s=0$ we have $$\delta^2(\textbf{1}) = \frac{\partial}{\partial s}_{|_{s=0}} \frac{\partial}{\partial t}_{|_{t=0}} w(t+s) 
= \delta(\textbf{1}) \circ \frac{\partial}{\partial s}_{|_{s=0}} w(s)$$
$$=  \delta(\textbf{1}) \circ (\delta(\textbf{1}) \circ w(0))= M_{\delta(\textbf{1})}^{2} (w(0)) = \delta(\textbf{1})^2.$$ Similarly,  $$\delta^n (\textbf{1}) = \frac{\partial^{n-1}}{\partial s^{n-1}}_{|_{s=0}} \frac{\partial}{\partial t}_{|_{t=0}} w(t+s)= M_{\delta(\textbf{1})}^{n} (w(0)) = \delta(\textbf{1})^n,$$ which gives the desired statement.\smallskip

It follows from the above identities that $$u(t) =\Phi(\frac{t}{2}) = e^{\frac{t}{2} \delta} (\textbf{1}) = \sum_{n=0}^{\infty} \frac{t^n}{2^n n!} \delta^n (\textbf{1}) = \sum_{n=0}^{\infty} \frac{t^n}{ 2^n n!} \delta(\textbf{1})^n = e^{t \frac{\delta(\textbf{1})}{2} }, \hbox{ for all } t\in \mathbb{R},$$ where, as we commented before this proposition, $\delta(\textbf{1})^* = -\delta(\textbf{1})$ in $M$  (cf. \cite[Proof of Lemma 1]{HoMarPeRu}).
\end{proof}

We continue by enunciating a variant of an argument which has been applied in several cases before.

\begin{remark}\label{r Jordan starisomorph from a unitary in JBWstar alg} Suppose $u$ is a unitary element in a unital JB$^*$-algebra $M$ such that $\|\textbf{1}-u\|<2$. By Lemma \ref{l untaries at short distance in a unital JB$^*$-algebra}$(a)$ we can find a self-adjoint element $h\in M_{sa}$ satisfying $u = e^{ih}$. Let us consider the unitary $\omega = e^{- i \frac{h}{2}}\in \mathcal{U} (M)$ and the mapping $U_{\omega} : M\to M$. Let us observe that $U_\omega(u) = \textbf{1}$ (just apply that $u$ and $\omega$ operator commute by definition). Since $\omega$ is unitary in $M$, and hence $U_{\omega}^{-1} = U_{\omega^*}$ (cf. \cite[Theorem 4.1.3]{Cabrera-Rodriguez-vol1}) we can conclude from \eqref{eq triple product non-expansive} that $U_\omega : M(u) =(M,\circ_u, *_{u})\to M$ is a unital surjective isometry (see also Lemma \ref{l CR Lemma 4.2.41}$(d)$ or \cite[Theorem 4.2.28]{Cabrera-Rodriguez-vol1} for a direct argument). We can therefore conclude from Theorem 6 in \cite{WriYou78} that $U_\omega : M(u)=(M,\circ_u, *_{u})\to M$ is a Jordan $^*$-isomorphism.\smallskip

As in the case of unital C$^*$-algebras (cf. the discussion preceding Proposition 4.4.10 in \cite{KR1}), not each unitary element of a unital JB$^*$-algebra $M$ is of the form $e^{ih}$ for some $h\in M_{sa}$. However, if we assume that $M$ is a JBW$^*$-algebra the conclusion is different. Let $u$ be a unitary element in a JBW$^*$-algebra $M$. Let $\mathcal{W}$ denote the JBW$^*$-subalgebra of $M$ generated by $u,$ $u^*$ and the unit of $M$. Clearly $\mathcal{W}$ is an associative JBW$^*$-algebra (cf. \cite[Theorem 3.2.2, Remark 3.2.3 and Theorem 4.4.16]{HOS}), and we can therefore assume that $\mathcal{W}$ is a commutative von Neumann algebra. Theorem 5.2.5 in \cite{KR1} implies the existence of an element $h\in \mathcal{W}_{sa}\subseteq M_{sa}$ such that $e^{i h} = u$ (in $\mathcal{W}$ and also in $M$). We therefore have \begin{equation}\label{eq unitaries are exponentials in JBW*-algebras} \mathcal{U} (M) = \{e^{i h} : h\in M_{sa}\}
 \end{equation} for all JBW$^*$-algebra $M$.
\end{remark}

The next lemma, which is a Jordan version of \cite[Lemma 7]{HatMol2012}, will be required later.

\begin{lemma}\label{l HM Lemma 7 Jordan} Let $M$ and $N$ be two unital JB$^*$-algebras. Let $\{u_{k}: 0\leq k\leq 2^n\}$ be a subset of $\mathcal{U}(M)$ and let $\Phi: \mathcal{U}(M)\to \mathcal{U}(N)$ be a mapping such that $U_{u_{k+1}} (u_k^*) = u_{k+2}$ and $$\Phi (U_{u_{k+1}} (u_k^*)) = U_{\Phi(u_{k+1})} (\Phi (u_k)^*),$$ for all $0\leq k\leq 2^n -2$. Then $U_{u_{2^{n-1}}} (u_0^*) = u_{2^n}$ and $$\Phi\left(U_{u_{2^{n-1}}} (u_0^*) \right) = U_{\Phi(u_{2^{n-1}})} \Phi(u_0)^*. $$
\end{lemma}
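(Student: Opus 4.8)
The plan is to concentrate the entire difficulty in a single purely triple-algebraic statement and then obtain the lemma by an induction on $n$ whose inductive step merely ``doubles'' the sequence. Observe first that, since $U_a(b^*)=\{a,b,a\}$, both the recursion $u_{k+2}=U_{u_{k+1}}(u_k^*)$ and the target identity $U_{u_{2^{n-1}}}(u_0^*)=u_{2^n}$ are statements about the triple product alone, and that the $\Phi$-compatibility clause is exactly the $u$-clause read inside $N$ through $v_k:=\Phi(u_k)$. Consequently it suffices to prove the following \emph{core claim}: in any unital JB$^*$-algebra $P$, if $a_0,\dots,a_4\in\mathcal U(P)$ satisfy $a_{k+2}=\{a_{k+1},a_k,a_{k+1}\}$ for $k=0,1,2$, then $a_4=\{a_2,a_0,a_2\}$.

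To prove the core claim I must reduce to two self-adjoint generators, which is the exact reach of the Shirshov--Cohn theorem. First I pass to the $a_0$-isotope $P(a_0)=(P,\circ_{a_0},*_{a_0})$, in which $a_0$ is the unit. By Lemma \ref{l CR Lemma 4.2.41} the set of unitaries and the triple product are unchanged, so the recursion still reads $a_{k+2}=\{a_{k+1},a_k,a_{k+1}\}$; in particular $a_2=\{a_1,a_0,a_1\}=a_1\circ_{a_0}a_1=a_1^{2}$, and an immediate induction shows that $a_2,a_3,a_4$ all lie in the JB$^*$-subalgebra $\mathcal B$ of $P(a_0)$ generated by the single element $a_1$ and the unit $a_0$. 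Since $\mathcal B$ is generated by two self-adjoint elements (the $*_{a_0}$-real and $*_{a_0}$-imaginary parts of $a_1$), Shirshov--Cohn places it unitally inside some $B(H)$; there each $a_k$ is a Hilbert-space unitary, the triple product is $\{x,y,z\}=\frac12(xy^{*}z+zy^{*}x)$, and using $a_k^{*}=a_k^{-1}$ the recursion becomes the associative relation $a_{k+2}=a_{k+1}a_k^{-1}a_{k+1}$. Writing $g=a_0^{-1}a_1$ one checks inductively that $a_k=a_0 g^{k}$, whence $\{a_2,a_0,a_2\}=a_2a_0^{-1}a_2=a_0g^{4}=a_4$. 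By the uniqueness of the triple product (Lemma \ref{l CR Lemma 4.2.41}$(c)$) this identity transfers back unchanged to $P$, proving the core claim.

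The lemma now follows by induction on $n$. For $n=1$ both assertions are the $k=0$ instances of the hypotheses. For $n\ge 2$ set $w_j:=u_{2j}$ for $0\le j\le 2^{n-1}$. Applying the core claim to each block $u_{2j},\dots,u_{2j+4}$ in $M$ gives $w_{j+2}=U_{w_{j+1}}(w_j^*)$, while applying it to $\Phi(u_{2j}),\dots,\Phi(u_{2j+4})$ in $N$ gives $\Phi(w_{j+2})=U_{\Phi(w_{j+1})}(\Phi(w_j)^*)$; thus $\{w_j\}$ and $\Phi$ satisfy the hypotheses of the lemma at level $n-1$. The induction hypothesis then yields $U_{w_{2^{n-2}}}(w_0^*)=w_{2^{n-1}}$ together with the matching $\Phi$-identity, and substituting $w_{2^{n-2}}=u_{2^{n-1}}$, $w_0=u_0$, $w_{2^{n-1}}=u_{2^n}$ produces exactly the desired conclusion.

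I expect the core claim to be the only real step, and its delicate point is not the computation (an elementary geometric-progression identity once we are inside $B(H)$) but the preparatory passage to the $a_0$-isotope: it is this move that lets a \emph{single} generator $a_1$ control all five unitaries, so that Shirshov--Cohn genuinely applies. A naive attempt using $a_0$ and $a_1$ as generators would involve four self-adjoint generators and could run into exceptional JB$^*$-algebras outside the reach of Shirshov--Cohn. The doubling step is then routine; the only thing to check is that the $\Phi$-hypotheses survive the passage to $\{w_j\}$, which is immediate because they have the same form as the corresponding $u$-relations.
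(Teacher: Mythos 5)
Your proof is correct, and its overall skeleton --- induction on $n$ by passing to the even-indexed subsequence $w_j = u_{2j}$ --- is the same as the paper's. Where you genuinely diverge is in how the inductive step is established. The paper proves the identity $u_{2j+4} = U_{u_{2j+2}}(u_{2j}^*)$ (your core claim) and its $\Phi$-analogue by two direct chains of equalities inside $M$ and $N$, using only the fundamental identity \eqref{eq fundamental identity UaUbUa} together with $U_u(u^*)=u$ and $U_{u^*}=U_u^{-1}$ for unitaries; no representation theory enters. You instead prove the core claim semantically: pass to the $a_0$-isotope so that a single generator $a_1$ (hence two self-adjoint elements) generates all five unitaries, invoke Shirshov--Cohn, and finish with a trivial geometric-progression computation among unitaries of $B(H)$. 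Your route costs more machinery --- in particular you silently use that a unitary of the JB$^*$-subalgebra $\mathcal{B}\subseteq B(H)$ is a unitary of the ambient $B(H)$, a standard fact which the paper also uses without proof in Lemma \ref{l B(a,b) holds for a b close enough}, so this is not a gap --- but it buys two things: the computation itself becomes elementary, and, more importantly, your observation that $v_k := \Phi(u_k)$ satisfies the very same recursion in $N$ (by combining the two hypotheses) lets you obtain the $\Phi$-compatibility of the halved family by simply applying the core claim a second time, whereas the paper must run a second, visibly longer computation interleaving the hypotheses on $\Phi$ with the fundamental identity. Conversely, the paper's argument is self-contained Jordan algebra and shorter in absolute terms. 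Both are valid; your isotope-plus-Shirshov--Cohn maneuver is in fact exactly the technique the paper deploys in Lemma \ref{l B(a,b) holds for a b close enough} and Theorem \ref{t Delta preserves inverted triple products in UM}, so your proof fits naturally into the paper's toolkit even though the paper chose a purely algebraic route for this particular lemma.
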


\begin{proof} We shall argue by induction on $n$. The statement is clear for $n=1$. Suppose that our statement is true for every family with $2^n+1$ elements satisfying the conditions above. Let $\{w_{k}: 0\leq k\leq 2^{n+1}\}$ be a subset of $\mathcal{U}(M)$ such that $U_{w_{k+1}} (w_k^*) = w_{k+2}$ and $$\Phi (U_{w_{k+1}} (w_k^*)) = U_{\Phi(w_{k+1})} (\Phi (w_k)^*),$$ for all $0\leq k\leq 2^{n+1} -2$. Set $u_k = w_{2k}$. We shall next show that we can apply the induction hypothesis to the family $\{u_{k}: 0\leq k\leq 2^{n}\}\subseteq \mathcal{U} (M)$. Fix $0\leq k\leq 2^n -2$,
$$\begin{aligned} u_{k+2}  &= w_{2 k +4} = U_{w_{2k +3}} (w_{2k +2}^*) = U_{U_{w_{2k +2}} (w_{2k +1}^*) } (w_{2k +2}^*) \\
&= U_{w_{2k +2}} U_{w_{2k +1}^*} U_{w_{2k +2}} (w_{2k +2}^*) = U_{w_{2k +2}} U_{w_{2k +1}^*} (w_{2k +2})\\
&= U_{w_{2k +2}} U_{w_{2k +1}^*} U_{w_{2k +1}} (w_{2k}^*) = U_{w_{2k +2}}  (w_{2k}^*) = U_{u_{k +1}}  (u_{k}^*),
\end{aligned} $$ where in the fourth equality we applied the identity \eqref{eq fundamental identity UaUbUa}.\smallskip

On the other hand, the previous identities and the induction hypothesis also give $$\begin{aligned}\Phi (U_{u_{k+1}} (u_k^*))& = \Phi (U_{w_{2k+2}} (w_{2k}^*)) = \Phi ( U_{w_{2k +3}} (w_{2k +2}^*) ) = U_{\Phi(w_{2 k+3})} (\Phi (w_{2k+2})^*) \\
&= U_{\Phi( U_{w_{2 k+2}} (w_{2k+1}^*))} (\Phi (w_{2k+2})^*) = U_{ U_{\Phi(w_{2 k+2})} (\Phi(w_{2k+1})^*)} (\Phi (w_{2k+2})^*)\\
& = U_{\Phi(w_{2 k+2})} U_{\Phi(w_{2k+1})^*} U_{\Phi(w_{2 k+2})} (\Phi (w_{2k+2})^*) \\
&= U_{\Phi(w_{2 k+2})} U_{\Phi(w_{2k+1})^*} (\Phi(w_{2 k+2})) \\
&= U_{\Phi(w_{2 k+2})} U_{\Phi(w_{2k+1})^*} (\Phi(U_{w_{2 k+1}} (w_{2k}^*))) \\
&= U_{\Phi(w_{2 k+2})} U_{\Phi(w_{2k+1})^*} U_{\Phi(w_{2 k+1})} (\Phi(w_{2k})^*)\\
&= U_{\Phi(w_{2 k+2})} (\Phi(w_{2k})^*) = U_{\Phi(u_{k+1})} (\Phi (u_k)^*),
 \end{aligned}$$ where in the sixth equality we applied the identity \eqref{eq fundamental identity UaUbUa}.\smallskip

It follows from the induction hypothesis that $$U_{w_{2^{n}}} (w_0^*) = U_{u_{2^{n-1}}} (u_0^*) = u_{2^n} = w_{2^{n+1}}$$ and $$\Phi\left(U_{w_{2^{n}}} (w_0^*) \right) =\Phi\left(U_{u_{2^{n-1}}} (u_0^*) \right) = U_{\Phi(u_{2^{n-1}})} \Phi(u_0)^*= U_{\Phi(w_{2^{n}})} \Phi(w_0)^*,$$ which finishes the induction argument.
\end{proof}

We are now ready to establish our first main result, which asserts that, under some mild conditions, for each surjective isometry $\Delta$ between the unitary sets of two unital JB$^*$-algebras $M$ and $N$ we can find a surjective real linear isometry $\Psi:M\to N$ which coincides with $\Delta$ on the subset $e^{i M_{sa}}$.

\begin{theorem}\label{t HM for unitary sets of unitary JB*-algebras}
Let $\Delta: \mathcal{U} (M)\to \mathcal{U} (N)$ be a surjective isometry, where $M$ and $N$ are two unital JB$^*$-algebras. Suppose that one of the following holds:
\begin{enumerate}[$(1)$]\item $\|\11b{N}-\Delta(\11b{M})\|<2$;
\item There exists a unitary $\omega_0$ in $N$ such that $U_{\omega_0} (\Delta(\11bM)) = \11bN.$
\end{enumerate} Then there exists a unitary $\omega$ in $N$ satisfying $$\Delta(e^{i M_{sa}})=U_{\omega^*}(e^{i N_{sa}}).$$ Furthermore, there exists a central projection $p\in N$ and a Jordan $^*$-isomorphism $\Phi:M\to N$ such that $$\begin{aligned}\Delta(e^{ih}) &=  U_{\omega^*}\left( p \circ \Phi(e^{ih}) \right) + U_{\omega^*}\left( (\11b{N}-p) \circ\Phi( e^{ih})^*\right)\\
&=  P_2(U_{\omega^*}( p)) U_{\omega^*}(\Phi(e^{ih}))  + P_2(U_{\omega^*}(\11b{N} -p)) U_{\omega^*}(\Phi( (e^{ih})^*)),
\end{aligned}$$ for all $h\in M_{sa}$. Consequently, the restriction $\Delta|_{e^{i M_{sa}}}$ admits a (unique) extension to a surjective real linear isometry from $M$ onto $N$.
\end{theorem}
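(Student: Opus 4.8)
The plan is to normalise $\Delta$ so that it fixes the unit, to prove that it sends each one-parameter group $t\mapsto e^{ith}$ to a one-parameter group in $N$, and then to extract the linear structure from the generators. First I would produce a unitary $\omega$ in $N$ with $U_{\omega}(\Delta(\11b{M}))=\11b{N}$: under $(2)$ take $\omega=\omega_0$, while under $(1)$ Lemma~\ref{l untaries at short distance in a unital JB$^*$-algebra}$(a)$ (read in the $\11b{N}$-isotope, which is $N$ itself) gives $k\in N_{sa}$ with $\Delta(\11b{M})=e^{ik}$, and $\omega:=e^{-ik/2}$ works since $\omega$ and $\Delta(\11b{M})$ operator commute (cf. Remark~\ref{r Jordan starisomorph from a unitary in JBWstar alg}). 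Setting $\Delta_0:=U_{\omega}\circ\Delta$, Lemma~\ref{l CR Lemma 4.2.41}$(d)$ shows $\Delta_0$ is again a surjective isometry $\mathcal{U}(M)\to\mathcal{U}(N)$, now with $\Delta_0(\11b{M})=\11b{N}$; since $\Delta=U_{\omega^*}\circ\Delta_0$, it suffices to treat $\Delta_0$ and transport the conclusion back through the triple isomorphism $U_{\omega^*}$.

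Fix $h\in M_{sa}$ and put $v(t):=\Delta_0(e^{ith})$, a norm-continuous curve with $v(0)=\11b{N}$. For the points $u_k:=e^{i(s+k\tau)h}$ one has $U_{u_{k+1}}(u_k^*)=u_{k+2}$, and if $\tau$ is small enough that consecutive $u_k$ lie at distance $<\frac12$, then Theorem~\ref{t Delta preserves inverted triple products in UM}$(3)$ furnishes exactly the hypotheses needed to run the dyadic Lemma~\ref{l HM Lemma 7 Jordan} on $\Delta_0$. Together with the continuity of $v$ this yields $v(2t-s)=U_{v(t)}(v(s)^*)$; specialising $t=0$ gives $v(-s)=v(s)^*$, and hence $U_{v(t)}(v(s))=v(2t+s)$ for all $t,s\in\RR$. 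Theorem~\ref{t Jordan unitary groups version of Stone's theorem} then produces a unique $\theta(h)\in N_{sa}$ with $\Delta_0(e^{ith})=e^{it\,\theta(h)}$ for every $t$; in particular $\Delta_0(e^{iM_{sa}})\subseteq e^{iN_{sa}}$, and the same argument for $\Delta_0^{-1}$ gives equality, whence $\Delta(e^{iM_{sa}})=U_{\omega^*}(e^{iN_{sa}})$. I expect the promotion of the purely local product preservation of Theorem~\ref{t Delta preserves inverted triple products in UM} to the global group identity $U_{v(t)}(v(s))=v(2t+s)$ --- the careful bookkeeping in Lemma~\ref{l HM Lemma 7 Jordan} and the limiting argument, crucially anchored at $v(0)=\11b{N}$ --- to be the main obstacle.

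The decisive point is that $\theta:M_{sa}\to N_{sa}$ is a \emph{surjective isometry fixing $0$}: it is bijective by the $\Delta_0^{-1}$ argument, and from $e^{ith}-e^{itk}=it(h-k)+O(t^2)$ together with the fact that $\Delta_0$ is isometric one gets $\|\theta(h)-\theta(k)\|=\lim_{t\to0^+}\frac1t\|e^{it\theta(h)}-e^{it\theta(k)}\|=\lim_{t\to0^+}\frac1t\|e^{ith}-e^{itk}\|=\|h-k\|$. By the Mazur--Ulam theorem $\theta$ is then real linear, so that no separate additivity computation is required. Thus $\theta$ is a surjective linear isometry between the JB-algebras $M_{sa}$ and $N_{sa}$, and the structure of such isometries (\cite{WriYou78,IsRo95}) shows that $s:=\theta(\11b{M})$ is a central symmetry of $N$ and $\theta=M_s\circ\Phi|_{M_{sa}}$ for a Jordan $^*$-isomorphism $\Phi:M\to N$. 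Writing $p:=\frac12(\11b{N}+s)$, a central projection, and using $\Phi(e^{ih})=e^{i\Phi(h)}$, one finds $e^{i\theta(h)}=e^{i(2p-\11b{N})\Phi(h)}=p\circ\Phi(e^{ih})+(\11b{N}-p)\circ\Phi(e^{ih})^*$. Applying $U_{\omega^*}$ gives the first displayed formula for $\Delta(e^{ih})$, the second being the same expression rewritten through the Peirce calculus of the unitary tripotent $U_{\omega^*}(\11b{N})$. Finally $\Psi(x):=p\circ\Phi(x)+(\11b{N}-p)\circ\Phi(x)^*$ is a surjective real linear isometry $M\to N$ agreeing with $\Delta_0$ on $e^{iM_{sa}}$, so $U_{\omega^*}\circ\Psi$ extends $\Delta|_{e^{iM_{sa}}}$; this extension is unique because the real-linear span of $e^{iM_{sa}}$ is all of $M$.
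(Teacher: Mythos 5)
Your proposal is correct and follows essentially the same route as the paper's own proof: normalisation by $U_\omega$, the dyadic argument combining Theorem~\ref{t Delta preserves inverted triple products in UM} with Lemma~\ref{l HM Lemma 7 Jordan} to get $U_{\Delta_0(E_h(t))}(\Delta_0(E_h(s)))=\Delta_0(E_h(2t+s))$, the Jordan Stone theorem (Theorem~\ref{t Jordan unitary groups version of Stone's theorem}) to obtain the generator map, the derivative-plus-Mazur--Ulam argument for real linearity of $\theta$ (the paper's $f$), and the Isidro--Rodr{\'\i}guez/Wright--Youngson decomposition $\theta = M_{2p-\11b{N}}\circ\Phi|_{M_{sa}}$ leading to the displayed formula. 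The only cosmetic differences are your indexing of the dyadic family and the (harmless) invocation of continuity at a step where the paper does not need it.
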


\begin{proof} If $\Delta$ satisfies $(1)$, by Remark \ref{r Jordan starisomorph from a unitary in JBWstar alg} (see also Lemma \ref{l untaries at short distance in a unital JB$^*$-algebra}$(a)$) there exists a unitary $\omega$ in $N$ such that $U_{\omega} (\Delta(\11bM)) = \11bN$, and $U_\omega$ is an isometric Jordan $^*$-isomorphism from the $\Delta(\textbf{1})$-isotope $N(\Delta(\textbf{1}))$ onto $N$. Since $\mathcal{U} \left( N,\circ_{\Delta(\11b{M})}, *_{\Delta(\11b{M})}\right) $  $= $ $\mathcal{U}(N)$ (see Lemma \ref{l CR Lemma 4.2.41}) we have $U_\omega(\mathcal{U}(N))=\mathcal{U}(N)$. In case that $(2)$ holds we take $\omega=\omega_0$. \smallskip

The surjective isometry $\Delta_0: \mathcal{U} (M)\to \mathcal{U} (N),$ $\Delta_0(u)=U_\omega(\Delta(u)),$ satisfies $\Delta_0(\11b{M})=\11b{N}$, that is, $\Delta_0$ is a unital surjective isometry between the unitary sets of $M$ and $N$.\smallskip

Fix $h\in M_{sa}$. We consider the continuous mapping $\mathbb{R}\to \mathcal{U} (M)$, $E_h(t)=e^{ith}$. Let $u$ and $v$ be two arbitrary unitary elements in $E_h(\RR)$ (that is, $u=e^{ith}$ and $v=e^{ish}$, for some $t,s\in \RR$). Choose now a positive integer $m$ such that
$\displaystyle e^{\frac{\| i(s-t)h \|_{_{M}}}{2^m}}-1 < \frac{1}{2}.$
It follows from this assumption that
\begin{equation}\label{eq m}
\| e^{\frac{i(s-t)h}{2^m}}-\11b{M} \|_{_{M}}\leq e^{\frac{\| i(s-t)h \|_{_{M}}}{2^m}}-1 < \frac{1}{2}.
\end{equation}
Let us define the family $\{u_k: 0\leq k \leq 2^{m+1}\}$, where $$u_k=u\circ e^{\frac{ik(s-t)h}{2^m}}, \quad 0\leq k \leq 2^{m+1}.$$
Since $u$ and $e^{\frac{ik(s-t)h}{2^m}}$ operator commute in $M,$ the element $u_k$ is a unitary in $M$ for every $k\in \{0,\dots,2^{m+1}\}$. Clearly, $u_0=u,$ $u_{2^m}=v$ and $u_{2^{m+1}}=U_{v} (u^*)$.\smallskip

Any two elements in the family $\{u_k\}_{k=0}^{2^{m+1}}$ operator commute, thus it is not hard to see that for any $0\leq k \leq 2^{m+1}-2$, $$U_{u_{k+1}}(u_k^{-1})=U_{u_{k+1}}(u_k^*)= u_{k+2}.$$
On the other hand, by our election of $m$ in (\ref{eq m}), we have
$$
\begin{aligned}
\|u_{k+1}-u_k\|_{_{M}}&= \| u\circ e^{\frac{i(k+1)(s-t)h}{2^m}}- u\circ e^{\frac{ik(s-t)h}{2^m}} \|_{_{M}} \\ 
&\leq  \|u\|_{_{M}}\| e^{\frac{ik(s-t)h}{2^m}}\circ e^{\frac{i(s-t)h}{2^m}} - e^{\frac{ik(s-t)h}{2^m}} \|_{_{M}}\\ &= \| e^{\frac{ik(s-t)h}{2^m}}\circ \left( e^{\frac{i(s-t)h}{2^m}}-\11b{M}\right)  \|_{_{M}}\\ & \leq \|e^{\frac{ik(s-t)h}{2^m}}\|_{_{M}}\| e^{\frac{i(s-t)h}{2^m}}-\11b{M} \|_{_{M}} 
\leq e^{\frac{\| i(s-t)h \|_{_{M}}}{2^m}}-1 < \frac{1}{2},
\end{aligned}
$$ for any $0\leq k \leq 2^{m+1}-1$.
Theorem \ref{t Delta preserves inverted triple products in UM}(3) affirms that the identity $$\Delta(U_{u_{k+1}} (u_k^{-1})) = \Delta(U_{u_{k+1}} (u_k^{*}))= U_{\Delta(u_{k+1})} (\Delta(u_k)^*)= U_{\Delta(u_{k+1})} (\Delta(u_k)^{-1}),$$ holds for every $0\leq k \leq 2^{m+1}-1$.\smallskip

Lemma \ref{l HM Lemma 7 Jordan} applied to $\Delta$ and the family $\{u_{k} : 0\leq k\leq 2^{m+1}\}$ proves that $U_{u_{2^{m}}} (u_0^*) = u_{2^{m+1}}= U_v(u^*)$ and \begin{equation}\label{eq Delta preserves o-p unitary groups}
\Delta(U_{v}(u^*))= \Delta\left(U_{u_{2^{m}}} (u_0^*) \right) = U_{\Delta(u_{2^{m}})} (\Delta(u_0)^*)=U_{\Delta(v)}(\Delta(u)^*),
\end{equation} for any $u,v$ arbitrary elements in the one-parameter unitary group $\{E_h(t) :{t\in \RR}\}$. By similar arguments applied to $\Delta_0$, or by applying $U_\omega$ at the previous identity, we deduce that \begin{equation}\label{eq Delta preserves o-p unitary groups Delta0} U_{\Delta_0(v)}(\Delta_0(u)^*) = \Delta_0(U_v(u^*)),\end{equation} for every $u,v$ in $\{E_h(t) : {t\in \RR}\}$. Taking $v=\textbf{1} = E_h(0)$ and $u$ arbitrary in \eqref{eq Delta preserves o-p unitary groups Delta0} and having in mind that $\Delta_0(\textbf{1})= \textbf{1}$ we get \begin{equation}\label{eq Delta0 symmetrc} \Delta_0(u)^* = \Delta_0(u^*), \hbox{ for all } u\in \{E_h(t) : {t\in \RR}\}.\end{equation} Furthermore, for any two $u,v\in \{E_h(t) : {t\in \RR}\}$ their adjoined $u^*$, $v^*$ also lie in the set $\{E_h(t) : {t\in \RR}\}$ and thus by \eqref{eq Delta preserves o-p unitary groups Delta0} and \eqref{eq Delta0 symmetrc} we derive $$ U_{\Delta_0(v)}(\Delta_0(u)) 
= U_{\Delta_0(v)}(\Delta_0(u^*)^{*}) = \Delta_0(U_v(u^{**})) = \Delta_0(U_v(u)),$$ for every $u,v$ in $\{E_h(t) : {t\in \RR}\}$, that is,
$$U_{\Delta_0(E_h(t))}(\Delta_0(E_h(s))) = \Delta_0(U_{E_h(t)}(E_h(s))) = \Delta_0(E_h(2 t+s)),$$
 for all $t,s\in \mathbb{R}$.\smallskip

By applying Theorem \ref{t Jordan unitary groups version of Stone's theorem} to $\{\Delta_0(E_h(t)): {t\in \RR}\}$ we deduce the existence (as well as the uniqueness) of a self-adjoint element $y$ in $N$ such that $\Delta_0 (E_h(t)) = e^{i t y}\in \mathcal{U}(N)$ for every $t\in \RR$. \smallskip

We can therefore define a mapping $f:M_{sa}\to N_{sa}$ as the one which maps $h$ into $y$, that is, $f(h)=y$ (where $y$ is the unique element in $N_{sa}$ such that $\Delta_0 (E_h(t)) = e^{i t y}\in \mathcal{U}(N)$ for every $t\in \RR$). Thus, $f$ satisfies \begin{equation}\label{eq f}
\Delta_0(e^{ith})=e^{itf(h)},
\end{equation}for each $t\in\RR$, and each $h\in M_{sa}$. We shall show that $f$ is actually a surjective isometry.\smallskip

Let us first observe that the injectivity of $\Delta_0$ implies that $f$ also is injective. On the other hand, replacing $\Delta_0$ by $\Delta_0^{-1}$ in the previous arguments, we can deduce the existence of an injective mapping $g:N_{sa}\to M_{sa}$ such that \begin{equation}\label{eq g}
\Delta_0^{-1} (e^{ity}) =e^{itg(y)},
\end{equation} for any $y\in N_{sa}$, and any $t\in\RR$. Therefore, by combining the properties of $f$ and $g$, we derive that $y=f(g(y))$ ($y\in N_{sa}$), and hence $f$ is surjective.\smallskip

The bijectivity of $f$ allows us to assure that $\Delta_0$ maps any one-parameter unitary group $\{ e^{ith} \}_{t\in \RR}$ in $\mathcal{U}(M)$ (for any self-adjoint $h$ in $M$) onto the one-parameter unitary group $\{ e^{ity} \}_{t\in\RR}$ in $\mathcal{U}(N)$, with $y\in N_{sa}$.  Consequently, for any $y\in N_{sa}$, and any real number $t$, $$U_{\omega^*}(e^{ity})=U_{\omega^*}(\Delta_0(e^{ith}))=U_{\omega^*}U_\omega(\Delta(e^{ith}))=\Delta(e^{ith}).$$ That proves the first statement, namely, $$\Delta(e^{i M_{sa}})=U_{\omega^*}(e^{i N_{sa}}). $$

Our next goal is to prove that $f$ is an isometry. To this end, given $h,h'\in M_{sa}$ and a real number $t$, let us compute the following limits (with respect to the norm topology) as $t\to 0$:
$$ \frac{e^{ith}-e^{ith'}}{t}=\frac{e^{ith}-\11b{N}}{t}-\frac{e^{ith'}-\11b{N}}{t}\longrightarrow ih-ih', $$
and $$\frac{e^{itf(h)}-e^{itf(h')}}{t}=\frac{e^{itf(h)}-\11b{N}}{t}-\frac{e^{itf(h')}-\11b{N}}{t}\longrightarrow if(h)-if(h').$$
On the other hand, by (\ref{eq f}), we have $$ \|e^{itf(h)}-e^{itf(h')} \|_{_{N}}=\|\Delta_0(e^{ith})-\Delta_0(e^{ith'}) \|_{_{N}}=\|e^{ith}-e^{ith'} \|_{_{N}}. $$ Therefore, by uniqueness of the limits above, $\|f(h)-f(h') \|_{_{N}}=\|h-h' \|_{_{N}}$. The arbitrariness of $h$ and $h'$ in $M_{sa}$ gives the expected conclusion, that is, $f:M_{sa}\to N_{sa}$ is a surjective isometry. Moreover, since $\Delta_0(\11b{M})=\11b{N}$, we deduce that $f(0)=0$, and hence the Mazur--Ulam theorem implies that $f$ is a surjective real linear isometry from $M_{sa}$ onto $N_{sa}$.\smallskip

It is known since the times of Kaplansky that the self-adjoint part of any JB$^*$-algebra is a JB-algebra. Thus, $f:M_{sa}\to N_{sa}$ can be regarded as a linear surjective isometry between JB-algebras. Theorem 1.4 and Corollary 1.11 in \cite{IsRo95} guarantee the existence of a central symmetry $f(\11b{M})$ in $M_{sa}$ and a unital surjective linear isometry $\Phi:M\to N$ such that \begin{equation}\label{eq f description}
f(h)= f(\11b{M})\circ \Phi(h),
\end{equation} for every $h\in M_{sa}$. 
Therefore $\Phi$ is a unital triple isomorphism between unital JB$^*$-algebras, and hence $\Phi$ must be a Jordan $^*$-isomorphism (cf. \cite[Proposition 5.5]{Ka83}).\smallskip

By construction there exists a central projection $p$ in $N$ such that $f(\11b{M})= 2p -\11b{N}= p - (\11b{M}-p)$, where $p$ and $(\11b{N}-p)$ clearly are orthogonal projections in $N$, and for any $n>0$, $$\left( 2p-\11b{N}\right)^n= \left( p-(\11b{N}-p)\right) ^n= p + (-1)^n(\11b{N}-p).$$

Finally, we shall describe $\Delta_0$ in terms of $p$ and $\Phi$. To achieve this goal, we shall employ the equalities obtained in \eqref{eq f}, \eqref{eq f description}, and the definition of the exponential in a Jordan algebra. According to this, given an arbitrary $h\in M_{sa}$, we have
$$\begin{aligned}
\Delta_0(e^{ih})&=e^{if(h)}=e^{if(\11b{M})\circ \Phi(h) }=e^{i(2p-\11b{N})\circ \Phi(h) }= \sum_{n=0}^{\infty}\frac{\left( i  (2p-\11b{N})\circ \Phi(h)\right)^n }{n!}.
\end{aligned}$$
Since $f(\11b{N})$ (and hence $p$) is central, it operator commutes with any element in $N$. Additionally, $\Phi$ is a Jordan $^*$-isomorphism, and we can thus conclude that
$$\begin{aligned}
\Delta_0(e^{ih})&=\sum_{n=0}^{\infty}\frac{ i^n  (2p-\11b{N})^n\circ \Phi(h)^n }{n!} =\sum_{n=0}^{\infty}\frac{ i^n  \left( p + (-1)^n(\11b{N}-p)\right) \circ \Phi(h^n) }{n!} \\
&= \sum_{n=0}^{\infty}\frac{ i^n p \circ \Phi(h^n) }{n!} + \sum_{n=0}^{\infty}\frac{ i^n (-1)^n(\11b{N}-p) \circ \Phi(h^n) }{n!} \\
&=p \circ \sum_{n=0}^{\infty}\frac{ i^n \Phi(h^n) }{n!} + (\11b{N}-p) \circ\sum_{n=0}^{\infty}\frac{ i^n (-1)^n \Phi(h^n) }{n!} \\
&=p \circ \Phi\left( \sum_{n=0}^{\infty}\frac{ i^n h^n }{n!}\right)  + (\11b{N}-p) \circ\Phi\left( \sum_{n=0}^{\infty}\frac{ i^n (-1)^n h^n}{n!}\right) \\
&= p \circ \Phi(e^{ih})  + (\11b{N}-p) \circ\Phi( e^{-ih})= p \circ \Phi(e^{ih})  + (\11b{N}-p) \circ\Phi( e^{ih})^*.
\end{aligned}$$
The arbitrariness of the self-adjoint element $h$ in $M$ gives the following statement
$$ \Delta_0(e^{ih})=U_\omega(\Delta(e^{ih}))=p \circ \Phi(e^{ih})  + (\11b{N}-p) \circ\Phi( e^{ih})^*,\quad (h\in M_{sa}), $$
and consequently,
\begin{align}
\Delta(e^{ih})&=U_{\omega^*}(\Delta(e^{ih}))=U_{\omega^*}\left( p \circ \Phi(e^{ih})  + (\11b{N}-p) \circ\Phi( e^{ih})^*\right) \label{final statement in first theorem} \\
&= U_{\omega^*}\left( P_2(p) \Phi(e^{ih}) \right) + U_{\omega^*} \left( P_2(\11b{N}-p)) \Phi( (e^{ih})^*)\right) \nonumber \\
&=  P_2(U_{\omega^*}( p)) U_{\omega^*}(\Phi(e^{ih}))  + P_2(U_{\omega^*}(\11b{N} -p)) U_{\omega^*}(\Phi( (e^{ih})^*))  \nonumber
\end{align}
because $U_{\omega^*}$ is a triple isomorphism.
\end{proof}

It should be remarked that the idea of employing one-parameter unitary groups was already employed by O. Hatori and L. Moln{\'a}r in \cite{HatMol2014}, where they were motivated by previous results on uniformly continuous group isomorphisms of unitary groups in AW$^*$-factors due to Sakai (see \cite{Sak55}). In our proof this idea is combined with the  Jordan version of Stone's one-parameter theorem developed in Theorem \ref{t Jordan unitary groups version of Stone's theorem}.

\begin{corollary}\label{c new }
Let $\Delta: \mathcal{U} (M)\to \mathcal{U} (N)$ be a surjective isometry, where $M$ and $N$ are two unital JB$^*$-algebras. Then there exist a central projection $p$ in the $\Delta(\11bM)$-isotope $N(\Delta(\11bM))$ and an isometric Jordan $^*$-isomorphism $\Phi:M\to N(\Delta(\11bM))$ such that $$\begin{aligned}\Delta(e^{ih}) &=  p \circ_{\Delta(\11bM)} \Phi(e^{ih}) +  (\11b{N}-p) \circ_{\Delta(\11bM)} \Phi(e^{ih})^{*_{\Delta(\11bM)}} \\
&=  p \circ_{\Delta(\11bM)} \Phi(e^{ih}) +  (\11b{N}-p) \circ_{\Delta(\11bM)} \Phi((e^{ih})^*),
\end{aligned}$$ for all $h\in M_{sa}$.
\end{corollary}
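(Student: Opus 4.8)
The plan is to deduce the corollary from Theorem \ref{t HM for unitary sets of unitary JB*-algebras} by passing to the isotope of $N$ at the unitary $\Delta(\11bM)$, where the extra hypotheses $(1)$ and $(2)$ of that theorem become automatic. First I would set $\widetilde{N}:=N(\Delta(\11bM))=(N,\circ_{\Delta(\11bM)},*_{\Delta(\11bM)})$. By Lemma \ref{l CR Lemma 4.2.41}$(a)$, $\widetilde{N}$ is a unital JB$^*$-algebra carried by the same Banach space $N$ (hence with the same norm, by uniqueness of the JB$^*$-norm) and with unit $\Delta(\11bM)$, while Lemma \ref{l CR Lemma 4.2.41}$(b)$ gives $\mathcal{U}(\widetilde{N})=\mathcal{U}(N)$. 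Consequently, read as a map $\Delta:\mathcal{U}(M)\to\mathcal{U}(\widetilde{N})$, the original $\Delta$ is still a surjective isometry, since the underlying metric and the relevant sets of unitaries are unchanged.

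The decisive observation is that $\Delta(\11bM)$ is precisely the unit $\11b{\widetilde{N}}$ of $\widetilde{N}$. Thus condition $(2)$ of Theorem \ref{t HM for unitary sets of unitary JB*-algebras}, applied to $\Delta:\mathcal{U}(M)\to\mathcal{U}(\widetilde{N})$, holds for free with the trivial choice $\omega_0=\11b{\widetilde{N}}$ (equivalently $(1)$ holds, the relevant distance being $0$). I would then invoke that theorem, which furnishes a central projection $p$ in $\widetilde{N}$ and a Jordan $^*$-isomorphism $\Phi:M\to\widetilde{N}$. Since here the unitary $\omega$ produced by the theorem may be taken to be $\11b{\widetilde{N}}$, the operator $U_{\omega^*}$ is the identity on $\widetilde{N}$ and the theorem's description collapses to
$$\Delta(e^{ih})= p \circ_{\Delta(\11bM)} \Phi(e^{ih}) + (\11b{\widetilde{N}}-p)\circ_{\Delta(\11bM)} \Phi(e^{ih})^{*_{\Delta(\11bM)}},\quad (h\in M_{sa}),$$
where $\11b{\widetilde{N}}-p=\Delta(\11bM)-p$ is the complementary central projection of $\widetilde{N}$, and the products and involution are those of the isotope. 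Because $\Phi(e^{ih})^{*_{\Delta(\11bM)}}=\Phi((e^{ih})^*)$ (as $\Phi$ is a $^*$-isomorphism onto $\widetilde{N}$), this is exactly the asserted formula.

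It remains to record that $\Phi$ is \emph{isometric}, which is automatic since every Jordan $^*$-isomorphism between JB$^*$-algebras preserves the norm. I do not expect any deep obstacle here: the whole content of the corollary is the isotope reduction, and the only point demanding care is the bookkeeping---checking that passing from $N$ to $N(\Delta(\11bM))$ genuinely leaves both the metric and the unitary set untouched (so that ``surjective isometry'' is preserved and $\Delta(\11bM)$ becomes a unit), and that every $U$-operator, product and involution appearing in Theorem \ref{t HM for unitary sets of unitary JB*-algebras} is consistently reinterpreted inside $\widetilde{N}$ rather than in $N$. Both facts are supplied directly by Lemma \ref{l CR Lemma 4.2.41}.
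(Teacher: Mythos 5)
Your proof is correct and is essentially the paper's own argument: the paper likewise deduces the corollary from Theorem \ref{t HM for unitary sets of unitary JB*-algebras} by observing that $\Delta(\11bM)$ is the unit of the isotope $N(\Delta(\11bM))$ and that $\mathcal{U}(N)=\mathcal{U}(N(\Delta(\11bM)))$ by Lemma \ref{l CR Lemma 4.2.41}$(b)$, so that the theorem's hypothesis holds trivially and the operator $U_{\omega^*}$ may be taken to be the identity. Your additional bookkeeping---in particular reading the complementary projection as $\11b{\widetilde{N}}-p=\Delta(\11bM)-p$, the unit being that of the isotope, and noting that $\Phi(e^{ih})^{*_{\Delta(\11bM)}}=\Phi((e^{ih})^*)$ since $\Phi$ is a Jordan $^*$-isomorphism onto the isotope---is exactly the correct reading of the statement and fills in what the paper leaves implicit.
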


\begin{proof} The desired statement follows from Theorem \ref{t HM for unitary sets of unitary JB*-algebras} by just observing that $\Delta(\11bM)$ is the unit of the $\Delta(\11bM)$-isotope $N(\Delta(\11bM))$, and  $\mathcal{U} (N) = \mathcal{U}(N(\Delta(\11bM)))$ (cf. Lemma \ref{l CR Lemma 4.2.41}$(b)$).
\end{proof}

\begin{remark}\label{r distance to the unit strictly smaller than 2} Let $\Delta: \mathcal{U} (M)\to \mathcal{U} (N)$ be a surjective isometry, where $M$ and $N$ are two unital JB$^*$-algebras. We have shown in the proof of Theorem \ref{t HM for unitary sets of unitary JB*-algebras} (see also Remark \ref{r Jordan starisomorph from a unitary in JBWstar alg}) that the assumption $\|\11b{N}-\Delta(\11b{M})\|<2$ implies the existence of a unitary $\omega$ in $N$ such that $U_{\omega} (\Delta(\11bM)) = \11bN$. So, condition $(1)$ seems stronger, but $(2)$ is all what is needed in the proof of this theorem.
\end{remark}

\begin{remark}\label{r first theorem from triple theory} From the point of view of JB$^*$-triples, the  conclusion of the previous Theorem \ref{t HM for unitary sets of unitary JB*-algebras} can be also stated in the following terms: There exist two orthogonal tripotents $u_1$ and $u_2$ in $M$ and two orthogonal tripotents $\widetilde{u}_1$ and $\widetilde{u}_2$ in $N$, a linear surjective isometry (i.e. triple isomorphism) $\Psi_1 : M_2(u_1)\to N_2(\widetilde{u}_1)$ and a conjugate linear surjective isometry (i.e. triple isomorphism) $\Psi_2 : M_2(u_2)\to N_2(\widetilde{u}_2)$ such that $M= M_2(u_1) \oplus^{\infty} M_2(u_2)$, $N= N_2(\widetilde{u}_1) \oplus^{\infty} N_2(\widetilde{u}_2)$, and the surjective real linear isometry $\Psi =\Psi_1+\Psi_2 : M_2(u_1) \oplus^{\infty} M_2(u_2)\to N_2(\widetilde{u}_1) \oplus^{\infty} N_2(\widetilde{u}_2)$ restricted to $e^{i M_{sa}}$ coincides with $\Delta$.\smallskip

To see this conclusion, we resume the proof of Theorem \ref{t HM for unitary sets of unitary JB*-algebras} from its final paragraph. Set $\widetilde{\Psi}= U_{\omega^*} \Phi$, $\widetilde{u}_1 = U_{\omega^*}( p),$ $\widetilde{u}_2 = U_{\omega^*} (\11b{N}-p),$ ${u}_1=  \widetilde{\Psi}^{-1} (\widetilde{u}_1)$ and ${u}_2=  \widetilde{\Psi}^{-1} (\widetilde{u}_2)$. Since $N = N_2(p) \oplus^{\ell_{\infty}} N_2(\11b{N}-p),$ $U_{\omega^*}$ and $\widetilde{\Psi}$ are triple isomorphisms, we deduce that $N= N_2(\widetilde{u}_1) \oplus^{\infty} N_2(\widetilde{u}_2)$ and $M= M_2(u_1) \oplus^{\infty} M_2(u_2)$. The identity in \eqref{final statement in first theorem} actually proves that
$$\begin{aligned}\Delta ( e^{i h}) &= P_2(U_{\omega^*}( p)) U_{\omega^*}(\Phi(e^{ih}))  + P_2(U_{\omega^*}(\11b{N} -p)) U_{\omega^*}(\Phi( (e^{ih})^*)) \\
&= P_2(\widetilde{u}_1) \widetilde{\Psi} P_2({u}_1) (e^{ih})  + P_2(\widetilde{u}_2) \widetilde{\Psi} P_2({u}_2) ( (e^{ih})^*),
\end{aligned} $$ for all $h\in M_{sa}$. It can be easily checked that the maps $\Psi_1(x) = P_2(\widetilde{u}_1) \widetilde{\Psi} P_2({u}_1) (x)$ and $\Psi_2 (x)  = P_2(\widetilde{u}_2) \widetilde{\Psi} P_2({u}_2) ( x^*)$ ($x\in M$) give the desired statement.
\end{remark}

The next corollary asserts that the Banach spaces underlying two unital JB$^*$-algebras are isometrically isomorphic if and only if the metric spaces determined by the unitary sets of these algebras are isometric.

\begin{corollary}\label{c two unital JB*algebras are isomorphic iff their unitaries are}
Two unital JB$^*$-algebras $M$ and $N$ are Jordan $^*$-isomorphic if and only if there exists a surjective isometry $\Delta: \mathcal{U}(M)\to \mathcal{U}(N)$ satisfying one of the following \begin{enumerate}[$(1)$]\item $\|\11b{N}-\Delta(\11b{M})\|<2$;
\item There exists a unitary $\omega$ in $N$ such that $U_{\omega} (\Delta(\11bM)) = \11bN.$
\end{enumerate}

\noindent Furthermore, the following statements are equivalent for any two unital JB$^*$-algebras $M$ and $N$:
 \begin{enumerate}[$(a)$]
\item $M$ and $N$ are isometrically isomorphic as (complex) Banach spaces;
\item $M$ and $N$ are isometrically isomorphic as real Banach spaces;
\item There exists a surjective isometry $\Delta: \mathcal{U}(M)\to \mathcal{U}(N).$
\end{enumerate}
\end{corollary}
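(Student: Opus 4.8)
The plan is to prove the equivalence by closing the cycle $(a)\Rightarrow (b)\Rightarrow (c)\Rightarrow (a)$, so that almost everything reduces to the already-established Corollary~\ref{c new } together with two standard facts from JB$^*$-triple theory. I would first dispose of the preceding biconditional: if $\Delta$ satisfies $(1)$ or $(2)$ then Theorem~\ref{t HM for unitary sets of unitary JB*-algebras} directly yields a Jordan $^*$-isomorphism $\Phi: M\to N$; conversely any Jordan $^*$-isomorphism $\Phi: M\to N$ is an isometry sending $\11b{M}$ to $\11b{N}$ and unitaries to unitaries, so its restriction is a surjective isometry $\mathcal{U}(M)\to\mathcal{U}(N)$ satisfying $(1)$ (indeed $\|\11b{N}-\Phi(\11b{M})\|=0$).

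The implication $(a)\Rightarrow (b)$ is immediate, since a complex-linear surjective isometry is in particular a real-linear one.

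For $(c)\Rightarrow (a)$ I would invoke Corollary~\ref{c new } with no assumption on $\Delta(\11b{M})$: it provides an isometric Jordan $^*$-isomorphism $\Phi: M\to N(\Delta(\11b{M}))$ onto the $\Delta(\11b{M})$-isotope. As $\Phi$ is complex-linear, bijective and isometric, and the isotope $N(\Delta(\11b{M}))=(N,\circ_{\Delta(\11b{M})},*_{\Delta(\11b{M})})$ shares its underlying complex Banach space and norm with $N$ (Lemma~\ref{l CR Lemma 4.2.41}$(a)$), the map $\Phi$, read as a map into $N$, is a surjective complex-linear isometry. Hence $M$ and $N$ are isometrically isomorphic as complex Banach spaces.

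The one step requiring input beyond the results already at hand is $(b)\Rightarrow (c)$, which I regard as the crux. Let $T: M\to N$ be a surjective real-linear isometry, and view $M,N$ as JB$^*$-triples. Here I would appeal to the real-linear refinement of Kaup's Banach--Stone theorem (due to T.~Dang): every surjective real-linear isometry between JB$^*$-triples is a real-linear triple isomorphism, that is, $T\{a,b,c\}=\{Ta,Tb,Tc\}$ for all $a,b,c$. Granting this, $T$ sends tripotents to tripotents and, since the condition $E_2(e)=E$ is equivalent to $\{e,e,x\}=x$ for every $x$, it sends unitary tripotents to unitary tripotents; the same holds for $T^{-1}$. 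By Remark~\ref{r two different notions of unitaries} the unitaries of a unital JB$^*$-algebra are precisely its unitary tripotents, so $T(\mathcal{U}(M))=\mathcal{U}(N)$ and $\Delta:=T|_{\mathcal{U}(M)}$ is the desired surjective isometry, proving $(c)$. The genuinely delicate ingredient is thus the real-linear triple-isomorphism theorem; once it is in place the cycle closes and nothing else beyond routine verification remains.
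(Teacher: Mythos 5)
Your proposal is correct and follows essentially the same route as the paper's own proof: the first biconditional from Theorem \ref{t HM for unitary sets of unitary JB*-algebras}, the implication $(c)\Rightarrow(a)$ from Corollary \ref{c new } (using that the isotope $N(\Delta(\11b{M}))$ shares its underlying Banach space and norm with $N$), and $(b)\Rightarrow(c)$ from Dang's theorem that surjective real-linear isometries between JB$^*$-triples are triple isomorphisms (the paper cites \cite[Corollary 3.2]{Da}), hence carry unitaries onto unitaries. The only difference is expository: you spell out the converse of the first equivalence and the tripotent-preservation argument, which the paper leaves implicit.
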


\begin{proof} The first equivalence follows from Theorem \ref{t HM for unitary sets of unitary JB*-algebras}.\smallskip

We deal next with the second statement. The implication $(a)\Rightarrow (b)$ is clear. $(b)\Rightarrow (c)$ Suppose we can find a surjective real linear isometry $\Phi: M\to N$. It follows from \cite[Corollary 3.2]{Da} (or even from \cite[Corollary 3.4]{FerMarPe}) that $\Phi$ is a triple homomorphism, that is, $\Phi$ preserves triple products of the form $\J xyz = (x\circ y^*) \circ z + (z\circ y^*)\circ x -
(x\circ z)\circ y^*$. In particular $\Phi$ maps unitaries in $M$ to unitaries in $N$, and hence $\Phi (\mathcal{U} (M))= \mathcal{U} (N)$. Therefore $\Delta = \Phi|_{\mathcal{U} (M)} : \mathcal{U} (M) \to \mathcal{U} (N)$ is a surjective isometry.\smallskip

The implication $(c)\Rightarrow (a)$ is given by Corollary \ref{c new }.
\end{proof}

As we have seen in Remark \ref{r Jordan starisomorph from a unitary in JBWstar alg}\eqref{eq unitaries are exponentials in JBW*-algebras} for each JBW$^*$-algebra $M$ the set of all unitaries in $M$ is precisely the set $e^{iM_{sa}}$. We are now ready to establish the main result of this paper in which we relax some of the hypotheses in Theorem \ref{t HM for unitary sets of unitary JB*-algebras} at the cost of considering surjective isometries between the unitary sets of two JBW$^*$-algebras.

\begin{theorem}\label{t HM for unitary sets of unitary JBW*-algebras}
Let $\Delta: \mathcal{U} (M)\to \mathcal{U} (N)$ be a surjective isometry, where $M$ and $N$ are two JBW$^*$-algebras. Then there exist a unitary $\omega$ in $N,$ a central projection $p\in N$, and a Jordan $^*$-isomorphism $\Phi:M\to N$ such that $$\begin{aligned}\Delta( u ) &=  U_{\omega^*}\left( p \circ \Phi(u) \right) + U_{\omega^*}\left( (\11b{N}-p) \circ\Phi( u)^*\right)\\
&=  P_2(U_{\omega^*}( p)) U_{\omega^*}(\Phi(u))  + P_2(U_{\omega^*}(\11b{N} -p)) U_{\omega^*}(\Phi( u^*)),
\end{aligned}$$ for all $u\in \mathcal{U}(M)$. Consequently, $\Delta$ admits a (unique) extension to a surjective real linear isometry from $M$ onto $N$.\end{theorem}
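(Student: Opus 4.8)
The plan is to derive this from Theorem \ref{t HM for unitary sets of unitary JB*-algebras} by exploiting the distinctive feature of JBW$^*$-algebras recorded in Remark \ref{r Jordan starisomorph from a unitary in JBWstar alg}, namely the identity \eqref{eq unitaries are exponentials in JBW*-algebras}, which asserts $\mathcal{U}(M) = e^{iM_{sa}}$ and $\mathcal{U}(N) = e^{iN_{sa}}$ for every JBW$^*$-algebra. The sole gap between the two theorems is that Theorem \ref{t HM for unitary sets of unitary JB*-algebras} requires one of the mild hypotheses $(1)$ or $(2)$ on $\Delta(\11b{M})$, so the first thing I would do is check that hypothesis $(2)$ holds automatically here. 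Since $N$ is a JBW$^*$-algebra, \eqref{eq unitaries are exponentials in JBW*-algebras} furnishes $k\in N_{sa}$ with $\Delta(\11b{M}) = e^{ik}$. Setting $\omega_0 := e^{-i\frac{k}{2}}\in \mathcal{U}(N)$, the elements $\omega_0$ and $\Delta(\11b{M})$ lie in the associative (abelian) JB$^*$-subalgebra generated by $k$ and $\11b{N}$, hence they operator commute and their Jordan products agree with the products of that abelian algebra, so
$$U_{\omega_0}(\Delta(\11b{M})) = 2\left(\omega_0\circ e^{ik}\right)\circ \omega_0 - \omega_0^2\circ e^{ik} = 2\, e^{i\frac{k}{2}}\circ e^{-i\frac{k}{2}} - e^{-ik}\circ e^{ik} = 2\,\11b{N} - \11b{N} = \11b{N}.$$
Thus condition $(2)$ of Theorem \ref{t HM for unitary sets of unitary JB*-algebras} is satisfied with this $\omega_0$.

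It is worth stressing that this is exactly the place where the JBW$^*$ hypothesis is indispensable: for a general unital JB$^*$-algebra the element $\Delta(\11b{M})$ need not be an exponential, and the distance $\|\11b{N}-\Delta(\11b{M})\|$ can equal $2$, so that neither $(1)$ nor $(2)$ is available; this is precisely why Theorem \ref{t HM for unitary sets of unitary JB*-algebras} must carry an additional assumption. The passage from $e^{iM_{sa}}$ to all of $\mathcal{U}(M)$ is likewise governed by \eqref{eq unitaries are exponentials in JBW*-algebras}.

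Having secured $(2)$, I would invoke Theorem \ref{t HM for unitary sets of unitary JB*-algebras} verbatim. It provides a unitary $\omega\in N$, a central projection $p\in N$ and a Jordan $^*$-isomorphism $\Phi:M\to N$ for which both displayed expressions for $\Delta(e^{ih})$ hold for every $h\in M_{sa}$. I would then simply rewrite these identities over $\mathcal{U}(M)$: since $M$ is a JBW$^*$-algebra, \eqref{eq unitaries are exponentials in JBW*-algebras} says that every $u\in \mathcal{U}(M)$ has the form $u = e^{ih}$ with $h\in M_{sa}$, so the formula for $\Delta(e^{ih})$ is exactly the claimed formula for $\Delta(u)$, now valid on the whole of $\mathcal{U}(M)$.

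For the extension statement I would note that Theorem \ref{t HM for unitary sets of unitary JB*-algebras} already guarantees that $\Delta|_{e^{iM_{sa}}}$ admits a unique extension to a surjective real linear isometry $\Psi:M\to N$; because $e^{iM_{sa}}=\mathcal{U}(M)$ by \eqref{eq unitaries are exponentials in JBW*-algebras}, this restriction is all of $\Delta$, so $\Psi$ extends $\Delta$ itself. Uniqueness is transparent: for $h\in M_{sa}$ the \emph{real} linear combinations $\tfrac{1}{2}(e^{ih}+e^{-ih}) = \cos h$ and $\tfrac{1}{2}(e^{ih}-e^{-ih}) = i\sin h$ of unitaries recover, via the continuous functional calculus (every self-adjoint contraction is $\cos h$, resp. $\sin h$, for a suitable $h$), all of $M_{sa}$ and all of $iM_{sa}$, so the real linear span of $\mathcal{U}(M)$ is $M$; hence any real linear map is determined by its values on $\mathcal{U}(M)$. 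The only genuinely non-routine point is the verification in the first step that the square-root unitary $\omega_0$ conjugates $\Delta(\11b{M})$ to $\11b{N}$; everything after that is bookkeeping transferring the $e^{iM_{sa}}$-statement of Theorem \ref{t HM for unitary sets of unitary JB*-algebras} to a $\mathcal{U}(M)$-statement.
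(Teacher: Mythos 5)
Your proposal is correct and follows essentially the same route as the paper: both verify hypothesis $(2)$ of Theorem \ref{t HM for unitary sets of unitary JB*-algebras} by writing $\Delta(\11b{M})=e^{ik}$ with $k\in N_{sa}$ (using \eqref{eq unitaries are exponentials in JBW*-algebras}) and conjugating by $\omega_0=e^{-i\frac{k}{2}}$ as in Remark \ref{r Jordan starisomorph from a unitary in JBWstar alg}, then transfer the conclusion from $e^{iM_{sa}}$ to all of $\mathcal{U}(M)$ via the same identity. Your explicit computation of $U_{\omega_0}(\Delta(\11b{M}))=\11b{N}$ and your span argument for uniqueness simply spell out details the paper leaves implicit.
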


\begin{proof} We only need to appeal to the identities $\mathcal{U} (M) =  e^{iM_{sa}}$ and $\mathcal{U} (N) =  e^{iN_{sa}},$ and to the arguments in Remark \ref{r Jordan starisomorph from a unitary in JBWstar alg} to find a unitary $\omega\in N$ such that $U_{\omega} (\Delta(\11bM)) = \11bN$. The rest is a consequence of Theorem \ref{t HM for unitary sets of unitary JB*-algebras}.
\end{proof}

\medskip\medskip

\textbf{Acknowledgements} Authors partially supported by the Spanish Ministry of Science, Innovation and Universities (MICINN) and European Regional Development Fund project no. PGC2018-093332-B-I00, Programa Operativo FEDER 2014-2020 and Consejer{\'i}a de Econom{\'i}a y Conocimiento de la Junta de Andaluc{\'i}a grant number A-FQM-242-UGR18, and Junta de Andaluc\'{\i}a grant FQM375.\smallskip

\end{document}